\theoremstyle{definition} % non-italic theorems
\newtheorem{theorem}{Theorem}[section]
\newtheorem{lemma}[theorem]{Lemma}
\newtheorem{proposition}[theorem]{Proposition}
\newtheorem{example}[theorem]{Example}
\newcommand{\B}{\mathcal{B}}
\newcommand{\V}{\mathcal{V}}
\newcommand{\cart}{\boxempty}
\DeclareMathOperator{\val}{val}
\DeclareMathOperator{\tw}{tw}
\DeclareMathOperator{\bn}{bn}
\DeclareMathOperator{\sbn}{sbn}
\DeclareMathOperator{\gon}{gon}
\DeclarePairedDelimiter\abs{\lvert}{\rvert}
\DeclarePairedDelimiter\ceil{\lceil}{\rceil}
\DeclarePairedDelimiter\norm{\lVert}{\rVert}
\DeclarePairedDelimiter\set{\{}{\}}
\DeclarePairedDelimiter\paren{(}{)}
\let\oldabs\abs
\def\abs{\@ifstar{\oldabs}{\oldabs*}}
\let\oldnorm\norm
\def\norm{\@ifstar{\oldnorm}{\oldnorm*}}
\let\oldparen\paren
\def\paren{\@ifstar{\oldparen}{\oldparen*}}
\author{Ivan Aidun, Frances Dean, Ralph Morrison, Teresa Yu, and Julie Yuan}
\begin{document}

%% Use \dochead if there is an article header, e.g. \dochead{Short communication}

\title{Treewidth and gonality of glued grid graphs}

\maketitle

\begin{abstract}
We compute the treewidth of a family of graphs we refer to as the glued grids, consisting of the stacked prism graphs and the toroidal grids. Our main technique is constructing strict brambles of large orders. We discuss connections to divisorial graph theory coming from tropical geometry, and use our results to compute the divisorial gonality of these graphs.\end{abstract}

\section{Introduction}

The \emph{treewidth} of a graph is a measure of the graph's similarity to a tree, first introduced by Robertson and Seymour in \cite{rs}.  Treewidth is a natural and powerful measure of a graph's complexity: there are polynomial time algorithms for many difficult problems on graphs of bounded treewidth \cite{bod}.  A well-known result is that an $n \times n$ grid graph has treewidth $n$ \cite{bod2}, implying that the treewidth of planar graphs is unbounded; this is in contrast to similar graph parameters such as the Hadwiger number, which is bounded for planar graphs \cite{wag}.  Since treewidth is minor-monotone \cite{hal}, the treewidth of a graph is at least the size of its largest grid minor.  For these reasons, studying grid graphs is important in the study of treewidth.

\begin{figure}[hbt]
    \centering
    \includegraphics{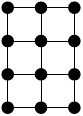}\quad\quad \includegraphics{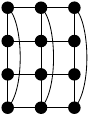}\quad\quad \includegraphics{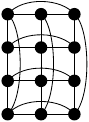}
    \caption{The $4\times 3$ grid, stacked prism, and toroidal grid, respectively}
    \label{grid_examples}
\end{figure}

In this paper, we are interested in certain natural generalizations of the grid graphs, which we collectively refer to as the glued grids:  the \emph{stacked prism graphs} $Y_{m,n}=C_m \cart P_n$ and the \emph{toroidal grid graphs} $T_{m,n}=C_m \cart C_n$.   Here $P_n$ is the path on $n$ vertices, $C_n$ is the cycle on $n$ vertices, and $G\cart H$ denotes the Cartesian product of $G$ and $H$. As illustrated in Figure \ref{grid_examples}, the glued grids resemble grids with additional edges: both the stacked prisms and the toroidal grids have edges wrapping from the top row to the bottom row, and  the toroidal grids also have edges wrapping from the leftmost column to the rightmost.  In this sense, we can see them as grids glued along their boundaries, analogous to topological quotients on the boundary of the unit square.

It was shown in \cite{koo} that the treewidth of the $n \times n$ toroidal grid is either $2n - 2$ or $2n - 1$, and that both of these values are achieved for certain $n$.  They also give the following general bounds: \[\min\set{m,n} \leq \tw(Y_{m,n}) \leq \min\set{m,2n}\]
and
\[2\min\set{m,n}-2 \leq \tw(T_{m,n}) \leq 2 \cdot \min\set{m,n}.\]  We use strict brambles to explicitly compute the treewidth of glued grids in all but three exceptional cases, namely $Y_{2n,n}$, $T_{n+1,n}$, and $T_{n,n}$, the third of which was already studied in \cite{koo}.  Our two main treewidth results are the following, with the main result of \cite{koo} included as the third case of Theorem \ref{theorem:toroidal}.

\begin{theorem}  The treewidth of the stacked prism $Y_{m,n}$ is
\[\tw(Y_{m,n})=\begin{cases}\min\{m,2n\} &\textrm{if $m\neq 2n$}
\\ \min\{m,2n\}-1\textrm{ or }\min\{m,2n\} &\textrm{if $m= 2n$}. \end{cases}\]
If $m=2n$, both possible treewidths occur for certain values of $n$.
\label{theorem:stackedprism}
\end{theorem}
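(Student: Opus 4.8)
The plan is to match the upper bound $\tw(Y_{m,n}) \le \min\{m,2n\}$ from \cite{koo} with a lower bound coming from strict brambles of large order, using the fact that a strict bramble of order $k$ forces $\tw(Y_{m,n}) \ge k-1$ (this is the standard bramble--treewidth inequality, which applies since a strict bramble is in particular a bramble). The bound from \cite{koo} already gives $\tw(Y_{m,n}) = m = \min\{m,2n\}$ whenever $m \le n$, so all of the work lies in sharpening the lower bound from $\min\{m,n\}$ up to $\min\{m,2n\}$ in the two remaining ranges $n \le m \le 2n$ and $m \ge 2n$. Concretely, I aim to construct a strict bramble of order $\min\{m,2n\}+1$ when $m \neq 2n$, which combined with the upper bound yields equality; the construction will lose exactly one in its order at the boundary $m = 2n$, producing the stated gap.

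First I would treat the range $n \le m \le 2n$, where the target is a strict bramble of order $m+1$. Viewing $Y_{m,n} = C_m \cart P_n$ as a cylinder with $m$ vertices around each of $n$ nested cycles, I would build a ``circle-type'' bramble whose elements are connected sets each meeting every cyclic row, arranged so that any two of them intersect and so that no set of fewer than $m+1$ vertices meets them all. The cover computation is the crux: one must show that a hitting set too small to meet the bramble would give a separator cutting the cylinder around its circumference using fewer than a full row of $m$ vertices, which is impossible. This gives order $m+1$ and hence $\tw \ge m$ when $m < 2n$, while at $m = 2n$ the same construction has only order $2n$, giving $\tw \ge 2n-1$.

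Next I would handle $m \ge 2n$, where the target is order $2n+1$, forcing $\tw \ge 2n$. The geometric picture is that separating the short, fat cylinder into two nonempty pieces requires cutting along two meridians, each a column of $n$ vertices, for a total of $2n$; accordingly I would design a ``meridian-type'' strict bramble whose minimum cover has size $2n+1$, again reducing the verification to a separator lower bound showing no set of $2n$ vertices performs the required separation. As before, at the shared boundary $m = 2n$ this construction degrades to order $2n$, recovering only $\tw \ge 2n-1$, consistently with the circle-type bramble.

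Together the two cases pin $\tw(Y_{2n,n})$ to $\set{2n-1,\,2n}$, and it remains to show both values are attained. For the lower value I would exhibit, for suitable $n$, an explicit tree decomposition of width $2n-1$ (beating the generic upper bound by one); for the upper value I would produce, for other $n$, a sharpened strict bramble of order $2n+1$ surviving the boundary, with the smallest cases checked directly. The main obstacle throughout is the order computation for the brambles---verifying that no small hitting set exists is exactly a separator lower bound on the cylinder---and, within that, the delicate behavior at $m = 2n$, where the slack disappears and the value of $\tw$ genuinely depends on $n$.
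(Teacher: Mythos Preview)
Your plan has a genuine obstruction at its core: a strict bramble of order $\min\{m,2n\}+1$ on $Y_{m,n}$ simply does not exist. The paper's Lemma~\ref{lemma:mathoverflow} shows that $\sbn(G)\le\tw(G)$ for every graph $G$, and since (by the upper bound from \cite{koo} that you yourself invoke) $\tw(Y_{m,n})\le\min\{m,2n\}$, no strict bramble on $Y_{m,n}$ can have order exceeding $\min\{m,2n\}$. So the objects you propose to construct in both ranges---a strict bramble of order $m+1$ when $n\le m<2n$ and one of order $2n+1$ when $m>2n$---are impossible, and the argument cannot proceed as written.

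The fix is precisely that stronger inequality: because $\tw(G)\ge\sbn(G)$ rather than merely $\sbn(G)-1$, it suffices to build strict brambles of order $\min\{m,2n\}$, and this is exactly what the paper does (Propositions~\ref{prop:m>2n} and~\ref{prop:m<2n}). For the boundary case $m=2n$, the paper does not rely on a ``degraded'' bramble at all; instead it observes that $Y_{2n-1,n}$ is a minor of $Y_{2n,n}$ and invokes minor-monotonicity of treewidth to inherit the bound $\tw(Y_{2n,n})\ge 2n-1$. Finally, to show that both values $2n-1$ and $2n$ actually occur when $m=2n$, the paper does not attempt the general constructions you sketch (an explicit width-$(2n-1)$ decomposition for some $n$, a sharpened bramble for others); it simply computes $\tw(Y_{4,2})=3$ and $\tw(Y_{6,3})=6$ in Sage. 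Your proposed route there is more ambitious but also far harder, and the paper explicitly leaves the general pattern open.
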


\begin{theorem}  The treewidth of the toroidal grid $T_{m,n}$ is
\[\tw(T_{m,n})=\begin{cases}2\min\{m,n\} &\textrm{if $|m-n|\geq2$}
\\ 2\min\{m,n\}-1\textrm{ or }2\min\{m,n\} &\textrm{if $|m-n|=1$}
\\ 2n-2\textrm{ or }2n-1 &\textrm{if $m=n$}.\end{cases}\]
If $|m-n|=1$ or $m=n$, both possible treewidths occur for certain values of $m$ and $n$.
\label{theorem:toroidal}
\end{theorem}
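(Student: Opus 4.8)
Throughout, assume without loss of generality that $n \le m$; this costs nothing since $C_m \cart C_n \cong C_n \cart C_m$, so that $\min\{m,n\} = n$ and the three cases become $m \ge n+2$, $m = n+1$, and $m = n$. By the bounds of \cite{koo} recalled above, $\tw(T_{m,n}) \le 2n$ in every case, and the diagonal case $m = n$ (where $\tw(T_{n,n}) \in \{2n-2, 2n-1\}$ with both values attained) is exactly the main theorem of \cite{koo}, which I would simply cite. Thus the entire content to be supplied is a lower bound matching the upper bound as closely as the stated cases allow: I need $\tw(T_{m,n}) \ge 2n$ when $m \ge n+2$, and $\tw(T_{m,n}) \ge 2n-1$ when $m = n+1$. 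Following the paper's stated technique, I would obtain these by exhibiting strict brambles of large order, using that a strict bramble of order $k$ certifies $\tw \ge k-1$.

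The plan for the lower bounds is to build a strict bramble $\mathcal{B}$ on $T_{m,n}$ out of the torus's vertical cycles (the $m$ columns, each a copy of $C_n$) and horizontal cycles (the $n$ rows, each a copy of $C_m$), taking as bramble elements connected ``cross-type'' sets: a column together with enough of the row structure to force a cover to pay for both homological directions of the torus. The guiding principle is that separating the torus requires cutting two noncontractible cycles, so that the naive cover by two columns already uses $2n$ vertices; the bramble must be designed so that no cover of size $2n$ meets every element. I would first verify the strict bramble axiom, namely that any two elements share a vertex, which should be routine since any two crosses meet at a row–column incidence vertex. I would then compute the order by characterizing minimum covers, arguing that when $m \ge n+2$ there is enough horizontal room that any vertex set of size at most $2n$ leaves some column and some row jointly unblocked, exhibiting an unhit element and forcing the order up to $2n+1$. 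This is precisely where the hypothesis $m \ge n+2$ enters, and combined with the upper bound it yields $\tw(T_{m,n}) = 2n$.

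For the boundary case $m = n+1$ the same construction loses one unit of slack, yielding a strict bramble of order only $2n$ and hence the weaker bound $\tw \ge 2n-1$; with the upper bound this gives $\tw(T_{n+1,n}) \in \{2n-1, 2n\}$. To show that both values genuinely occur, I would on one hand construct explicit tree (or path) decompositions of width $2n-1$ for suitable parameters, certifying $\tw = 2n-1$, and on the other hand sharpen the bramble to order $2n+1$ for the remaining parameters, certifying $\tw = 2n$. As in the analysis of the diagonal case in \cite{koo}, I expect the dichotomy to be governed by a parity or small divisibility condition on $n$, and I would isolate the two families by a direct construction in each.

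The main obstacle is the exact order computation, and specifically the lower bound on the size of a cover. Verifying pairwise intersection, and checking that $2n+1$ vertices suffice to cover $\mathcal{B}$ (or that a width-$2n$ decomposition exists), are the easy directions; the hard part is showing that \emph{every} cover has size at least $2n+1$ (respectively $2n$), which requires a careful extremal argument ruling out all size-$2n$ covers by always locating an unblocked cross. The delicacy of this counting is exactly what collapses the bound by one in the regimes $|m-n| \le 1$, and pinning down which of the two admissible values occurs there is the subtlest part of the argument.
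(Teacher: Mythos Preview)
Your plan contains a genuine gap that would cause the argument to fail. You state that ``a strict bramble of order $k$ certifies $\tw \ge k-1$,'' and accordingly aim for a strict bramble of order $2n+1$ when $m \ge n+2$. But the whole point of Lemma~\ref{lemma:mathoverflow} is the stronger inequality $\tw(G) \ge \sbn(G)$: a strict bramble of order $k$ already certifies $\tw \ge k$. This is not a cosmetic slip, because your target is unattainable. Since the paper (and \cite{koo}) shows $\tw(T_{m,n}) = 2n$ when $m \ge n+2$, Lemma~\ref{lemma:mathoverflow} forces $\sbn(T_{m,n}) \le 2n$, so no strict bramble of order $2n+1$ exists. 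Likewise, for $m = n+1$ you seek a strict bramble of order $2n$; but $\tw(T_{4,3}) = 5$, so $\sbn(T_{4,3}) \le 5 < 6 = 2n$, and again the sought object does not exist. The correct targets are a strict bramble of order $2n$ for $m \ge n+2$, and (as the paper does) a \emph{non-strict} bramble of order $2n$ for $m = n+1$, invoking $\tw = \bn - 1$ rather than Lemma~\ref{lemma:mathoverflow}.

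Two secondary points. First, the brambles in the paper are considerably more elaborate than bare ``crosses'': the elements are unions of one or two columns with three or four rows, each with carefully placed single-vertex deletions subject to collinearity constraints, and the order computation is a multi-case pigeonhole argument rather than a single unblocked-cross observation. Your sketch underestimates this. Second, for the claim that both values occur when $|m-n|=1$, the paper does not produce an infinite family or a parity dichotomy; it simply computes $\tw(T_{4,3})=5$ and $\tw(T_{5,4})=8$ in Sage. Your proposed route (explicit width-$(2n-1)$ decompositions for one family, sharpened brambles for another) would be a substantial strengthening and is not what the theorem asserts.
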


Our original motivation for this project was to understand \emph{divisorial gonality}, another invariant of these glued graphs.  In Section \ref{section:gonality}, we discuss connections between gonality and treewidth, and we compute the gonality of all glued grids except for $Y_{2n,n}$, $T_{n+1,n}$, and $T_{n,n}$ in Theorems \ref{theorem:stackedprismgon} and \ref{theorem:toroidalgon}.

Our paper is organized as follows. In Section \ref{section:definitions}, we review background and definitions and establish an explicit link between treewidth and strict brambles.  In Sections \ref{section:cylinder} and \ref{section:toroidal}, we prove Theorems \ref{theorem:stackedprism} and \ref{theorem:toroidal}, respectively.  Section \ref{section:gonality} consists of our results on divisorial gonality.

\section{Treewidth, brambles, and strict brambles}
\label{section:definitions}

Throughout this paper we let $G=(V,E)$ be a connected, simple graph with vertex set $V$ and edge set $E$. When multiple graphs are under discussion, we use $V(G)$ and $E(G)$ to denote the vertices and edges of $G$, respectively.  A subset $A \subset V(G)$ is said to be \emph{connected} if the subgraph it induces is a connected graph.  Given two graphs $G_1 = (V_1,E_1)$ and  $G_2 = (V_2, E_2)$, the \emph{Cartesian product} $G_1 \cart G_2$ is the graph with vertex set $V(G_1\times G_2)=V_1 \times V_2$ and an edge between $(v_1,v_2)$ and $(u_1,u_2)$ if
\begin{itemize}
    \item $v_2 = u_2$ and $v_1u_1 \in E_1$, or
    \item $v_1 = u_1$ and $v_2u_2 \in E_2$.
\end{itemize}
With this definition, we see that the $m \times n$ grid graph $G_{m,n}$ is the Cartesian product $P_m \cart P_n$ of two path graphs; the stacked prism graph $Y_{m,n}$ is the Cartesian product $C_m \cart P_n$ of a cycle graph and a path; and the toroidal grid graph $T_{m,n}$ is the Cartesian product $C_m \cart C_n$ of two cycles.  To ensure these graphs are simple, we assume $m\geq 3$ for $Y_{m,n}$ and that $m,n\geq 3$ for $T_{m,n}$.

Given a graph $G$, let $T$ be a tree, and let $\V = \set{V_t}$ be a family of subsets $V_t \subset V(G)$ indexed by the nodes $t \in T$.  The pair $(T,\V)$ constitutes a \emph{tree decomposition} of $G$ if it satisfies the following:
\begin{itemize}
    \item[(1)] $\displaystyle{\bigcup_{t\in T} V_t = V(G)}$;
    \item[(2)] if $uv \in E(G)$ then there is a node $t \in T$ such that $u,v \in V_t$; and
    \item[(3)] if $v \in V_{t_1},V_{t_2}$, then for every $t'$ in the (unique) path from $t_1$ to $t_2$ in $T$, we have $v \in V_{t'}$.
\end{itemize}

The sets $V_t$ are commonly referred to as \emph{bags}.  The \emph{width} of a tree decomposition is one less than the size of the largest bag: $w(T,\V) = \max_{t\in T}(\abs{V_t}-1)$.  The \emph{treewidth} of $G$, denoted $\text{tw}(G)$, is the minimum width of a tree decomposition of $G$.

One useful {property} of a tree decomposition is its connection to separating sets in the starting graph.  Given subsets $S$, $U_1,$ and $U_2$ of $V(G)$, we say $S$ \emph{separates} $U_1$ from $U_2$ if every path from an element of $U_1$ to an element of $U_2$ includes an element of $S$.  Note that if $S$ separates $U_1$ and $U_2$, then $U_1\cap U_2\subseteq S$.

\begin{lemma}[Lemma 12.3.1 in \cite{die}]\label{lemma:separating}   Let $(T,\V)$ be a tree decomposition of a graph $G$.  Let $t_1t_2$ be any edge of $T$, and let $T_1$ and $T_2$ be the two connected components of $T-t_1t_2$, where $t_i\in T_i$.  If $U_i=\bigcup_{t\in T_i}V_t$, then $V_{t_1}\cap V_{t_2}$ separates $U_1$ from $U_2$ in $G$.
\end{lemma}

Our main tools for computing treewidth are structures called brambles.  We say two subsets $A,B \subset V(G)$ \emph{touch} if either they share a vertex or there is an edge between them.  A \emph{bramble} is a family of connected, mutually touching vertex sets.  If for all $B_1,B_2 \in \B$ we have $B_1 \cap B_2 \neq \emptyset$, then the bramble $\B$ is called \emph{strict}.  A set $S \subset V(G)$ is said to \emph{cover} the bramble $\B$ if $S \cap B \neq \emptyset$ for all $B \in \B$.  A set which covers $\B$ is called a \emph{hitting set} for $\B$.  The \emph{order of a bramble $\B$}, written $\norm{\B}$, is the minimum size of a hitting set for $\B$. 

\begin{example}[From Section 4.5 in \cite{db}]
\label{example:bramble}
Consider the $m\times n$ grid $G_{m,n}$.  Let $\mathcal{B}=\{B_{ij}\}$, where $B_{ij}$ is the union of the $i^{th}$ column with the $j^{th}$ row.  This is a strict bramble:  $B_{ij}$ and $B_{k\ell}$ intersect at the intersection of the $i^{th}$ column with the $\ell^{th}$ row.  It turns out that $\norm{\B}=\min\{m,n\}$.  To see this, assume without loss of generality that $m\leq n$.   If $S$ consists of the $m$ vertices in a column, then $S$ is a hitting set, as  each $B_{ij}$ intersects every column in at least one point.  So, $\norm{B}\leq m$.  However, any set $T$ with $|T|\leq m-1$ is not a hitting set, as $T$ must then miss some row and some column, and so there exists some $B_{ij}$ such that $B_{ij}\cap T=\emptyset$.  It follows that $\norm{\B}\geq m$, and we conclude that $\norm{\B}= m=\min\{m,n\}$.

\end{example}

The \emph{bramble number (resp. strict bramble number)} of a graph is the maximum order of a  bramble (resp. strict bramble) on $G$.  We denote the bramble number $\bn(G)$ and the strict bramble number $\sbn(G)$.  We can immediately see that $\sbn(G)\leq \bn(G)$, since every strict bramble is a bramble.

\begin{example}  For an $m\times n$ grid graph, we have $\bn(G_{m,n})=\tw(G_{m,n})+1=\min\{m,n\}+1$ by \cite{bod2} and Proposition \ref{bramble bound} below. On the other hand, $\sbn(G_{m,n})\geq\min\{m,n\}$ by Example \ref{example:bramble}.  Since $\sbn(G_{m,n})\leq \bn(G_{m,n})$, the bramble number and the strict bramble number differ by at most $1$.  (It will follow from Lemma \ref{lemma:mathoverflow} that they cannot be equal, so they differ by exactly $1$.)
\end{example}

It was shown in  \cite[\S 2.2]{koy} that $\bn(G)\leq 2\sbn(G)$, so the bramble number and the strict bramble number never differ by more than a factor of $2$. This bound is sharp, due to the following example.

\begin{example} For a complete graph on $n$ vertices we have $\bn(K_n)=n$, since the family of all one-vertex subsets of $V(G)$ is a bramble of order $n$, and it has the largest possible order of any bramble on any graph with $n$ vertices.  However, $\sbn(K_n)= \ceil{n/2}$.  The fact that $\sbn(K_n)\geq \ceil{n/2}$ follows from the fact that  $n=\bn(G)\leq 2\sbn(G)$.  To see that $\sbn(K_n)\leq \ceil{n/2}$,  suppose for the sake of contradiction that $\mathcal{B}$ is a strict bramble on $K_n$ with $\norm{\B}>\ceil{n/2}$.  Then for any choice of $\ceil{n/2}$ points, there exists an element of $\mathcal{B}$ consisting of vertices not including those points.  Labelling the vertices of $K_n$ as $v_1,\ldots,v_n$, this means that there is an element of $\B$ not hit by $\{v_1,\ldots,v_{\lceil{n/2}\rceil}\}$, and also an element of $\B$ not hit by $\{v_{\lceil n/2\rceil+1},\ldots,v_{n}\}$.  However, two such elements of $\mathcal{B}$ cannot intersect, contradicting $\mathcal{B}$ being a strict bramble.  We conclude that $\sbn(K_n)=\ceil{n/2}$.
\end{example}

 We remark that \cite{koy} refers to the strict bramble number as the \emph{pairwise intersecting number}.  They use this number to provide a lower bound on the treewidth of product graphs.  Unfortunately, their lower bounds are  $\tw(Y_{m,n}) \geq 3$ and $\tw(T_{m,n}) \geq 5$, which is the desired lower bound only for $Y_{3,2}$.

The utility of brambles in studying treewidth comes from a theorem of Seymour and Thomas stating that $\tw(G) < k$ if and only if $G$ does not admit a bramble of order greater than $k$ \cite{st}.  In other words, we have $\tw(G) = \bn(G) - 1$.  While the reverse direction of their theorem is quite involved, the forward direction is straightforward.  We reproduce a proof here for the reader's convenience, based on the proof presented in \cite[Theorem 12.4.3]{die}.

\begin{proposition}\label{bramble bound}
Let $G$ be a graph.  If $\tw(G) < k$, then $G$ does not admit a bramble of order greater than $k$.
\end{proposition}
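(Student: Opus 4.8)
The plan is to prove the contrapositive-flavored but stronger statement that if $(T,\V)$ is any tree decomposition of $G$ of width less than $k$, then for every bramble $\B$ on $G$ one of the bags $V_t$ is itself a hitting set for $\B$. Since width less than $k$ forces $\abs{V_t}\le k$ for every node $t$, this immediately yields $\norm{\B}\le k$, so no bramble of order greater than $k$ can exist. Accordingly, I fix a tree decomposition of width less than $k$ together with an arbitrary bramble $\B$, and suppose toward a contradiction that \emph{no} bag is a hitting set. Then for each node $t$ I may choose an element $B_t\in\B$ with $B_t\cap V_t=\emptyset$.

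The crucial structural fact I would invoke is the standard observation that, for any connected set $B\subset V(G)$, the set of nodes $\set{t : V_t\cap B\neq\emptyset}$ induces a connected subtree of $T$; this follows from property (3) together with the connectivity of $B$ and property (2) (an edge of $B$ lands both endpoints in a common bag). Granting this, for each node $t$ the subtree $T_{B_t}$ of nodes meeting $B_t$ avoids $t$, because $B_t\cap V_t=\emptyset$, and therefore lies entirely inside a single component of $T-t$. I then orient the edge of $T$ incident to $t$ that points into that component. Carrying this out at every node assigns to each node an outgoing edge; since $T$ has exactly one fewer edge than it has nodes, the pigeonhole principle forces two of these choices to name the same edge $t_1t_2$ — equivalently, that edge is oriented from $t_1$ toward $t_2$ and from $t_2$ toward $t_1$ simultaneously.

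It remains to extract the contradiction from such a pair. Deleting $t_1t_2$ splits $T$ into a component $T_1\ni t_1$ and a component $T_2\ni t_2$, with corresponding unions of bags $U_1$ and $U_2$; a routine consequence of the tree-decomposition axioms is that $S=V_{t_1}\cap V_{t_2}$ separates $U_1\setminus S$ from $U_2\setminus S$ in $G$. Because the edge at $t_1$ points toward $t_2$, the subtree $T_{B_{t_1}}$ lies in $T_2$, so $B_{t_1}\subseteq U_2$; and $B_{t_1}\cap V_{t_1}=\emptyset$ gives $B_{t_1}\cap S=\emptyset$, hence $B_{t_1}\subseteq U_2\setminus S$. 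Symmetrically $B_{t_2}\subseteq U_1\setminus S$. Thus $B_{t_1}$ and $B_{t_2}$ lie on opposite sides of the separator $S$, which neither of them meets, so they share no vertex and have no edge between them; that is, they do not touch, contradicting $\B$ being a bramble. I expect the main effort to lie in carefully justifying the two structural lemmas — that the nodes meeting a connected set induce a subtree, and that $V_{t_1}\cap V_{t_2}$ separates the two sides of the tree — since the orientation/pigeonhole step and the final separation argument are then essentially immediate.
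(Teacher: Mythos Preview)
Your argument is correct, but it takes a genuinely different route from the paper's. The paper orients every edge of $T$: for each edge $t_1t_2$, if $V_{t_1}\cap V_{t_2}$ fails to cover $\B$, then all uncovered bramble elements lie in the same component of the induced separation (since they mutually touch), and the edge is oriented toward that component. A node $t$ at the end of a maximal directed path is then a sink, and its bag $V_t$ must cover $\B$. Your approach is node-centric rather than edge-centric: you pick one witness $B_t$ per node, orient a single incident edge toward it, and use the pigeonhole inequality $|V(T)|>|E(T)|$ to find an edge chosen from both ends, yielding two bramble elements on opposite sides of a separator that neither meets. Both arguments rely on the same two structural facts you flagged (connected sets induce subtrees of $T$; adjacent-bag intersections separate $G$), but the paper leaves them implicit while you make them explicit. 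The paper's version is marginally more direct in that it constructively identifies a covering bag rather than deriving a contradiction; your version has the virtue of isolating exactly the two lemmas doing the work and avoids the need to argue that bramble elements uncovered by $V_{t_1}\cap V_{t_2}$ all land on the same side.
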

\begin{proof}
Let $\B$ be a bramble and let $(T,\V)$ be a tree decomposition of $G$.  We will show that one of the bags $V_t$ covers $\B$.  For every edge $t_1 t_2 \in E(T)$, if $X \coloneqq V_{t_1} \cap V_{t_2}$ covers $\B$, then we are done.  Otherwise, $T - t_1 t_2$ separates the vertices of $G$ into two sets, which we label $U_1$ and $U_2$ as in Lemma \ref{lemma:separating}.  Now, every $B \in \B$ not hit by $X$ must fall into either $U_1\setminus X$ or $U_2\setminus X$, and in fact they must all fall into the same set since these two sets do not touch.  If they fall into $U_1$, then we orient the edge $t_1 t_2$ toward $U_1$, and similarly if they fall into $U_2$.

We orient all of the edges of $T$ in the same manner, and let $t$ be a node of $T$ which is the end of a maximal directed path.  Then $V_t$ covers $\B$.
\end{proof}

Thus, we know that for any bramble $\B$, $\tw(G) \geq \norm{\B} - 1$, so we can lower bound treewidth by constructing a bramble of large order. Indeed, this is the main technique in \cite{koo}.  The next claim shows that we can omit the $-1$ when the bramble we construct is strict.  We thank Jan Kyncl on MathOverflow for communicating the following proof to us.

\begin{lemma}
\label{lemma:mathoverflow}
For any graph $G$, we have $\tw(G) \geq \sbn(G)$.
\end{lemma}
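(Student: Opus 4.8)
The plan is to combine the standard subtree representation of a tree decomposition with the pairwise-intersection property of a strict bramble, and then to work slightly harder to remove the $+1$ that the proof of Proposition \ref{bramble bound} alone would give. Fix a tree decomposition $(T,\V)$ of $G$ of width $w=\tw(G)$; I would take it to be smooth, so that every bag has size exactly $w+1$ and every adjacency separator $V_{t_1}\cap V_{t_2}$ has size exactly $w$. For each $B\in\B$ let $T_B$ be the set of nodes $t$ with $V_t\cap B\neq\emptyset$; since $B$ is connected, $T_B$ is a subtree of $T$. Because $\B$ is strict, any two elements share a vertex, and that vertex lies in some bag, so the subtrees $T_B$ pairwise intersect. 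The Helly property for subtrees of a tree then yields a common node $t^*\in\bigcap_{B}T_B$, and $V_{t^*}$ meets every $B$, i.e.\ it is a hitting set; this already gives the weak bound $\norm{\B}\le w+1$.

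To shave off the $+1$, I would look for a hitting set that is an adjacency separator rather than a full bag, since separators have size $w$. The key observation is that for an edge $e=t_1t_2$ the separator $V_{t_1}\cap V_{t_2}$ meets a given $B$ if and only if $t_1,t_2\in T_B$: a common vertex of $B$ and the separator lies in both bags, giving the forward direction, while the reverse uses connectivity of $B$ together with the fact that the separator separates the two sides of $e$ in $G$. Hence this separator is a hitting set for all of $\B$ exactly when $e$ is an edge of the common subtree $T^*:=\bigcap_B T_B$. Therefore, if $T^*$ contains at least one edge, the corresponding separator is a hitting set of size $w$, and we conclude $\norm{\B}\le w=\tw(G)$, as desired.

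The remaining, and I expect hardest, case is when $T^*$ consists of a single node $t^*$: then no separator is a hitting set and we must genuinely reduce the bag $V_{t^*}$ of size $w+1$. Here I would try to show that some vertex $v\in V_{t^*}$ is \emph{removable}, meaning $V_{t^*}\setminus\{v\}$ still meets every element of $\B$. If this failed for every $v$, then each $v$ would admit a \emph{private} element $B_v$ with $B_v\cap V_{t^*}=\{v\}$, giving $w+1$ connected sets that, by strictness, pairwise intersect, necessarily outside $V_{t^*}$. Smoothness forces each separator at $t^*$ to omit exactly one vertex of $V_{t^*}$, and a connected set meeting $V_{t^*}$ only at $v$ can enter a branch at $t^*$ only through a separator containing $v$; moreover any vertex outside $V_{t^*}$ lies in a single branch. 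I would leverage this branch-and-separator geometry to contradict the existence of all the forced pairwise intersections. In the degenerate case where every separator at $t^*$ omits the \emph{same} vertex $v$, that vertex appears only in $V_{t^*}$, so all its neighbors lie in $V_{t^*}$; then a connected set meeting $V_{t^*}$ only at $v$ cannot leave $v$ and must equal $\{v\}$, forcing $\B$ to have order $1$. Establishing the contradiction in the non-degenerate case cleanly is the main obstacle: pairwise intersection by itself gives no common point, so the argument must exploit the tree structure and smoothness rather than a Helly-type shortcut.

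Finally I would dispatch the base case $\abs{V(T)}=1$ separately: then $G$ embeds in a complete graph on $w+1$ vertices, and $\sbn\le\ceil{(w+1)/2}\le w$, so the bound holds trivially. If the single-node case above proves stubborn, an alternative route is induction on $\abs{V(G)}$: one may first assume every vertex lies in some element of $\B$ (otherwise delete it and apply the inductive hypothesis to a subgraph of no larger treewidth), and then prune a leaf of $T$ to reduce to a smaller instance.
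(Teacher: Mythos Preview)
Your Helly-and-subtree setup is a clean reformulation of what the paper extracts from Proposition~\ref{bramble bound}: both arguments arrive at a covering bag $V_{t^*}$ and reduce to the situation where no adjacency separator covers $\B$ (your ``$T^*=\{t^*\}$'' is exactly the paper's case~(ii)). The divergence, and the gap, is in how you handle that remaining case. You correctly identify it as the crux, set up private sets $B_v$ and branch geometry, but then stop, explicitly labelling the non-degenerate subcase ``the main obstacle'' without resolving it; the inductive fallback is likewise only a sketch. As written, the proposal does not establish the lemma.

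The paper disposes of this case in one short move that needs neither smoothness nor private sets nor a degenerate/non-degenerate split. Pick any neighbour $t'$ of the covering node $t$; since no separator covers, some $B_2\in\B$ misses $V_t\cap V_{t'}$. The paper then argues that $V_t\setminus V_{t'}$ must cover $\B$: otherwise some $B_1$ misses it, so $B_1$ meets $V_t$ only inside $V_t\cap V_{t'}$ while $B_2$ meets $V_t$ only inside $V_t\setminus V_{t'}$, and the paper concludes $B_1\cap B_2=\emptyset$, contradicting strictness. Since $|V_t\cap V_{t'}|\ge 1$, the hitting set $V_t\setminus V_{t'}$ has at most $w$ vertices. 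Your smoothness assumption is unnecessary here and in fact steers you toward the harder problem: by forcing $|V_{t^*}\setminus V_{t'}|=1$, it rules out the paper's ``remove a whole separator's worth of vertices at once'' shortcut and leaves you hunting for a single removable vertex among $w+1$ candidates.
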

\begin{proof}
Let $\B$ be a strict bramble and $(T,\V)$ be a tree decomposition of a graph $G$. The proof of Proposition \ref{bramble bound} shows that at least one of the following must be true:
\begin{itemize}
    \item[(i)] There is an edge $t_1 t_2$ of $T$ such that $V_{t_1} \cap V_{t_2}$ covers $\B$. 
    \item[(ii)] There is a bag $V_t$ which covers $\B$.
\end{itemize}
If (i) holds, then since we may assume the $V_t$ are pairwise distinct, $\norm{\B}$ is at most the width of $(T,\V)$.  Now assume that (ii) holds and (i) does not.  Since $V_t$ may have $\tw(G) + 1$ vertices, we want to show that a proper subset of $V_t$ will also cover $\B$.  If $G$ has $\abs{V_t}$ vertices, then we can omit any one vertex from $V_t$ to still retain a cover of $\B$, or else $\B$ contains a singleton and has order 1. 

Since $G$ is connected, if $G$ has more than $\abs{V_t}$ vertices, then there is an edge $tt'$ in $T$ such that $\abs{V_t \cap V_{t'}} \geq 1$. By Lemma \ref{lemma:separating}, the set $S=V_t\cap V_{t'}$ separates the sets of vertices of $G$ induced by the two components of $T-tt'$; call these $U$ on the $t$ side, and $U'$ on the $t'$ side.  Note that $S=U\cap U'$.  We claim that $V_t \setminus V_{t'}$ covers $\B$.  Suppose not; then there must be a set $B_1 \in \B$ disjoint from $V_t \setminus V_{t'}$.  %\textcolor{blue}{Then $B_1\subseteq S\cup U'$:  we know $B_1$ intersects  $S$ since $V_t$ covers $\B$, and it cannot contain any elements of $U$ since this would force $t'$ to be connected to vertices in the wrong component of $T-tt'$.  However, by the assumption that $V_t \cap V_{t'}$ does not cover $\B$, there must be a set $B_2 \in \B$ disjoint from $V_t \cap V_{t'}$.  Since $B_2$ must intersect $V_t$ and since $V_t\cap V_{t'}$ separates $U$ and $U'$, we have that $B_2\subseteq U$.  It follows that $B_1$ and $B_2$ do not overlap.  This contradicts the assumption of a strict bramble.}
%\textcolor{red}{Since $V_t$ is a hitting set for $\B$, we know $B_1$ contains an element $v$ of $V_t\cap V_{t'}$. By the assumption that $V_t \cap V_{t'}$ does not cover $\B$, there must be a set $B_2 \in \B$ disjoint from $S=V_t \cap V_{t'}$.  Since $\B$ is a strict bramble, we can find a vertex $w\in B_1\cap B_2$.  Since $B_2$ is disjoint from $S$, we know $w\notin S$.  If $w\in U-S$, then there is a path from $v$ to $w$ contained in $B_1$.}
Since $V_t$ covers $\B$, we know $B_1$ contains a vertex in $V_t\setminus(V_t\setminus V_{t'})=V_t\cap V_{t'}=S$.  By Lemma \ref{lemma:separating} and the structure of a tree decomposition, any path from a vertex in $S$ to a vertex in $U\setminus S$ must contain a vertex in $V_t\setminus V_{t'}$.  Since $B_1$ has a vertex in $S$ but not in $V_t\setminus V_{t'}$, it follows that $B_1\subseteq (U\setminus S)^C=U'$. However, by the assumption that $S$ does not cover $\B$, there must be a set $B_2 \in \B$ disjoint from $S$. We know that $B_2$ intersects $V_t\subset U$ in a vertex $v$, so if $B_2$ intersects $U'$ in a vertex $w$, then since $B_2$ is connected there exists a $vw$-path from $v\in U$ to $w\in U'$ not passing through $S$, which is impossible by Lemma \ref{lemma:separating}.  Thus we have that no such $w$ exists, so $B_2\subseteq (U')^C$.  It follows that $B_1$ and $B_2$ do not intersect, a contradiction to $\B$ being a strict bramble.
\end{proof}

We note that an immediate corollary of this result is that $\bn(G) > \sbn(G)$.  

\section{Brambles for the stacked prism}
\label{section:cylinder}

In this section, we consider $Y_{m,n}$, the $m\times n$ stacked prism graph with $m$ rows and $n$ columns, glued along the $n$-side.  We first present strict brambles of order $\min\{m,2n\}$ in the cases when $2n\neq m$ to achieve a lower bound on treewidth.  We then argue a lower bound of $2n-1$ on treewidth  when $m=2n$, although this bound is not sharp in all cases.  We  combine these with upper bounds on treewidth to achieve our desired results from Theorem \ref{theorem:stackedprism}.

For the case when $2n<m$, consider the family $\mathcal{B}_1$ consisting of all subgraphs of $Y_{m,n}$ made up of a column with a single vertex deleted, together with two rows (neither intersecting the column in the deleted point).  An element of this family is shown in Figure \ref{gg_bramble_2n<m}.

\begin{figure}[hbt]
    \centering
    \includegraphics{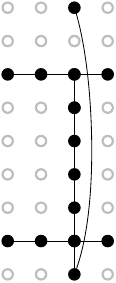}
    \caption{An element of $\mathcal{B}_1$}
    \label{gg_bramble_2n<m}
\end{figure}

\begin{proposition}  If $2n<m$, then the family $\mathcal{B}_1$ is a strict bramble of order $2n$ on $Y_{m,n}$.
\label{prop:m>2n}
\end{proposition}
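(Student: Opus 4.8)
The plan is to verify the three defining properties of a strict bramble---connectedness of each element, pairwise intersection, and the order computation---dispatching the first two quickly and concentrating effort on the order. Throughout I label vertices as $(i,j)$ with $i$ the index on the cycle $C_m$ (so $i$ names one of the $m$ rows) and $j$ the index on the path $P_n$ (so $j$ names one of the $n$ columns); thus row $i$ is the path $\{(i,j) : 1 \leq j \leq n\}$ and column $j$ is the cycle $\{(i,j) : 1 \leq i \leq m\}$. A typical element of $\mathcal{B}_1$ is then determined by a column index $j_0$, a deleted row index $i_0$, and two distinct rows $a, b$ with $a, b \neq i_0$; call it $B(j_0, i_0; a, b)$. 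For connectedness I note that deleting one vertex from the cycle $C_m$ leaves a path, and each of the rows $a, b$ meets this punctured column at $(a, j_0)$ and $(b, j_0)$, which remain present precisely because $a, b \neq i_0$; hence each element is connected.

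For strictness I would argue as follows. Given two elements $B(j_0, i_0; a, b)$ and $B(j_0', i_0'; a', b')$, consider the punctured column at $j_0'$ of the second together with the two rows $a, b$ of the first. The rows $a$ and $b$ meet column $j_0'$ in the distinct vertices $(a, j_0')$ and $(b, j_0')$, and since $a \neq b$ at most one of these is the single deleted vertex $(i_0', j_0')$. Whichever row---say $a$---satisfies $a \neq i_0'$ contributes the vertex $(a, j_0')$, which lies in the first element (it is on row $a$) and in the second (it is on the punctured column $j_0'$). This common vertex shows $\mathcal{B}_1$ is strict.

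The order computation splits into matching bounds. For $\norm{\mathcal{B}_1} \leq 2n$ I would exhibit the $2n$-vertex hitting set consisting of any two distinct full rows, $S = (\text{row } a_0) \cup (\text{row } b_0)$: given any $B(j_0, i_0; a, b)$, the vertices $(a_0, j_0)$ and $(b_0, j_0)$ both lie on column $j_0$, and since $a_0 \neq b_0$ at most one equals $i_0$, so $S$ meets the punctured column and hence $B$. For the reverse inequality $\norm{\mathcal{B}_1} \geq 2n$, suppose toward a contradiction that $S$ is a hitting set with $\abs{S} \leq 2n - 1$. Counting by columns, at most $n-1$ columns can contain two or more vertices of $S$ (else $\abs{S} \geq 2n$), so some column $j_0$ meets $S$ in at most one vertex. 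Counting by rows, at most $2n - 1 < m$ rows can contain a vertex of $S$, where the strict inequality uses the hypothesis $2n < m$; hence at least $m - (2n-1) \geq 2$ rows are disjoint from $S$. The point that makes these compatible is that I may pick two $S$-free rows $a, b$ and let $i_0$ be the row index of the unique vertex of $S$ in column $j_0$ (or, if $S$ misses column $j_0$ entirely, any index other than $a, b$): since $a$ and $b$ are $S$-free they cannot equal $i_0$, so $B(j_0, i_0; a, b)$ is a legitimate element of $\mathcal{B}_1$, and by construction it avoids $S$, contradicting that $S$ is a hitting set.

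I expect the lower bound to be the crux: the other steps are direct unwindings of the definitions, whereas here one must locate a sparse column and two empty rows simultaneously and check they assemble into a valid bramble element. The clean observation that carries it is that an $S$-free row can never pass through the deleted vertex of a column that $S$ already hits, which is exactly where the strict inequality $2n < m$ enters---for $m = 2n$ the row count only guarantees a single empty row, and indeed that is the exceptional case to be treated separately.
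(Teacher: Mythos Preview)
Your proof is correct and follows essentially the same approach as the paper: the connectedness and strictness arguments match, the hitting set of two full rows is the same upper-bound witness, and the lower-bound pigeonhole (find a column with at most one point of $S$ and two $S$-free rows, then delete the $S$-point from that column) is exactly the paper's construction. The only cosmetic difference is that your strictness argument intersects the rows of one element with the punctured column of the other, while the paper phrases it the other way around; the content is identical.
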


\begin{proof}  First we show that $\mathcal{B}_1$ is a strict bramble.  Every element of $\B_1$ is connected, by construction.  Any two elements of $\B_1$ intersect in at least one vertex since the column of one is either the same as the column of the other (perhaps with a different point deleted), or will intersect at least one of the two rows of the other.  Thus $\mathcal{B}_1$ is a strict bramble.

We  now show that $\norm{\B_1}=2n$.  Let $S\subset V(Y_{m,n})$ be a subset of size $2n-1$.  Since $2n<m$, there are at least two rows that do not intersect $S$.  Similarly, since there are $n$ columns, at least one column has no more than one element of $S$.  Build an element of $\B_1$ out of these two rows and out of this column, with a point removed at the same location as the element of $S$ (if it exists in that column; otherwise any point not in one of the rows may be removed).  This graph does not intersect $S$, so $S$ cannot be a hitting set of $\B_1$.
This means $\norm{\B_1}\geq 2n$.

To see that  $\norm{\B_1}\leq 2n$, let $S$ be the collection of all vertices in the first two rows. Every column intersects $S$ in two points,  and since each element of $\B$ contains all but one of the points in a column, every element of $\B$ intersects $S$.  We conclude that $\norm{\B}=2n$.
\end{proof}

The second bramble we present is for the case when $m<2n$.  We construct $\mathcal{B}_2=\mathcal{C}\cup\mathcal{D}\cup\mathcal{E}$ as follows.  In all cases, we forbid the deletion of the intersection vertex of a row and a column.

\begin{itemize}
    \item An element of $\mathcal{C}$ is the union of a row and a column.
    \item  An element of $\mathcal{D}$ is the union of one row and two columns, where each of columns has a point removed, each from a different row.
    \item  An element of $\mathcal{E}$ is a union of two rows and two columns, where both the columns have the vertex removed from the same row.
\end{itemize}

See Figure \ref{gg_bramble_2n>m} for illustrations.

\begin{figure}[hbt]
    \centering
    \includegraphics{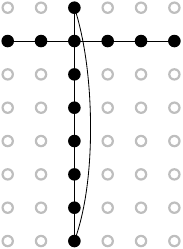}\quad\quad \includegraphics{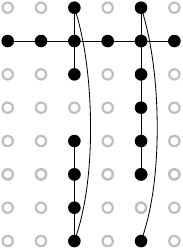}\quad\quad \includegraphics{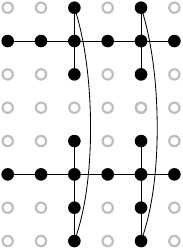}
    \caption{Elements of $\mathcal{C}$, $\mathcal{D}$, and $\mathcal{E}$, respectively}
    \label{gg_bramble_2n>m}
\end{figure}

\begin{proposition}  If $m<2n$, then the family $\mathcal{B}_2$ is a strict bramble of order $m$ on $Y_{m,n}$.
\label{prop:m<2n}
\end{proposition}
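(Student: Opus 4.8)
The plan is to follow the template of Proposition \ref{prop:m>2n}: first verify that $\mathcal{B}_2$ is a strict bramble, and then pin down its order by exhibiting a hitting set of size $m$ and showing that no set of size $m-1$ can meet every element. Throughout I will use that a column is a copy of $C_m$, so deleting one of its vertices leaves a connected path on $m-1 \geq 2$ vertices, and that a row is never punctured.

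For connectivity, each element is a union of full rows together with punctured columns; since every deletion is forbidden from landing on a chosen row--column intersection, each punctured column still meets every chosen row, so the union is connected. For the pairwise intersection property I would argue by a reduction. Given elements $A,B$, a full row $R$ of $A$ and a column $C$ of $B$ meet at $(R,C)$, which lies in $A$ automatically and lies in $B$ unless it is exactly the punctured vertex of $B$'s copy of $C$ and $R$ is not a row of $B$. Since $B$ deletes at most one vertex per column, if $A$ has two rows (type $\mathcal{E}$) then for each column of $B$ at least one of $A$'s rows survives, so $A \cap B \neq \emptyset$; by symmetry the same holds if $B$ has two rows. This leaves the case where $A$ and $B$ each have a single row, $R_A$ and $R_B$. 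If $R_A = R_B$ the two elements share that whole row; if they share a column $C$, then their two copies of $C$ overlap in $C$ minus at most two vertices, which is nonempty since $m \geq 3$. Otherwise $R_A \neq R_B$ and the columns are disjoint, and here the design does the work: a type-$\mathcal{D}$ element punctures its two columns in \emph{different} rows while a type-$\mathcal{C}$ element punctures nothing, so in either case $B$ has a column $C$ whose puncture (if any) avoids row $R_A$, and then $(R_A,C)$ lies in both $A$ and $B$.

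To bound the order, for the upper bound I note that every element of $\mathcal{B}_2$ contains a full row, so a single full column---a hitting set of size $m$---meets each element, giving $\norm{\mathcal{B}_2} \leq m$. For the lower bound, fix $S \subseteq V(Y_{m,n})$ with $\abs{S} = m-1$. Since there are $m$ rows, at least one row $R^*$ avoids $S$, and I would then split on the columns. If some column $C$ avoids $S$, the type-$\mathcal{C}$ element $R^* \cup C$ misses $S$. If instead every column meets $S$, then $m-1 = \abs{S} \geq n$, and counting the excess over one hit per column shows at least $2n - m + 1 \geq 2$ columns contain exactly one vertex of $S$. If two such columns carry their $S$-vertex in different rows, puncturing each there yields a type-$\mathcal{D}$ element together with $R^*$ that avoids $S$; the two punctures then lie in different rows as required, and neither lies in $R^*$. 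Otherwise all of the single-hit columns share one row $r^*$, so $r^*$ carries at least two vertices of $S$; hence the remaining at most $m-3$ vertices of $S$ occupy the other $m-1$ rows, leaving at least two rows free, and these serve as the two rows of a type-$\mathcal{E}$ element whose two columns are punctured along $r^*$. In every case $S$ misses an element of $\mathcal{B}_2$, so $\norm{\mathcal{B}_2} \geq m$.

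The routine parts are connectivity and the upper bound; the crux is the lower-bound case analysis. The delicate point is checking that the three element types exactly cover all distributions of $S$---in particular that the one situation forcing type $\mathcal{E}$ (two single-hit columns sharing a row) is precisely the situation that guarantees the two free rows type $\mathcal{E}$ needs. This dovetailing, together with the matching ``different rows'' condition that makes type $\mathcal{D}$ work in the strictness argument, is where the specific construction of $\mathcal{B}_2$ must be invoked rather than a generic counting bound.
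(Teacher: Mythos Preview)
Your proof is correct and follows essentially the same approach as the paper's: verify strictness by checking connectivity and pairwise intersection, exhibit a full column as a hitting set of size $m$, and for the lower bound take $|S|=m-1$, locate a free row, and split into the three cases (free column $\to\mathcal{C}$; two single-hit columns punctured in different rows $\to\mathcal{D}$; two single-hit columns punctured in the same row, forcing a second free row $\to\mathcal{E}$). Your strictness argument is organized slightly differently---you case on how many full rows each element has, while the paper observes directly that a full column (in $\mathcal{C}$) or two punctured-in-different-rows columns (in $\mathcal{D}$) meet every full row---but the content is the same.
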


\begin{proof} First we show that $\B_2$ is a strict bramble.   Note that each set is connected by construction.  To see that each pair of elements of $\B_2$ intersects, first note that any element of $\mathcal{C}$ contains an entire column, and any element of $\mathcal{B}_2$ has an entire row, so any such pair must intersect.  Similarly, any element of $\mathcal{D}$ intersects any complete row, so it intersects any element of $\mathcal{B}_2$.  Finally, to see that any element of $\mathcal{E}$ intersects any element of $\mathcal{E}$, note that any pair of complete rows intersects any element of $\mathcal{E}$.  Thus, $\mathcal{B}_2$ is a strict bramble.

We now show that $\norm{\B_2}=m$.  Suppose $S\subset V(Y_{m,n})$ with $|S|=m-1$.  We know that at least one row does not intersect $S$; call this the $i^{th}$ row.  Suppose for a moment that some column does not intersect $S$; then this column, together with the $i^{th}$ row, forms an element of $\mathcal{C}$ not hit by $S$.  Otherwise, every column intersects $S$.  Since $|S|=m-1\leq 2n-2$, at least two columns intersect $S$ in exactly one point.  We must therefore be in one of two cases:
\begin{itemize}
\item[(i)] The two columns intersect $S$ in different rows.  Then these columns with their $S$-points deleted, together with the $i^{th}$ row, form an element of $\mathcal{D}$ not hit by $S$.
\item[(ii)] The two columns intersect $S$ in the same row.  Then this row has at least two elements of $S$.  It follows that there must be another row \textbf{besides} the $i^{th}$ row that does not intersect $S$.  We can build an element of $\mathcal{E}$ out of the two rows not intersecting $S$, along with the two columns intersecting $S$ in the same row (with that point deleted from each column).  This element is not hit by $S$.
\end{itemize}
In every case, some element of $\B_2$  is not hit by $S$.  Thus $\norm{\B_2}\geq m$.  To see that $\norm{\B_2}\leq m$, let $S$ be the vertices in the first column of $G$. Since every element of $\B_2$ contains an entire row, $S$ is a hitting set.
\end{proof}

We now consider the case when $m=2n$.

\begin{proposition} The treewidth of $Y_{2n,n}$ is at least $2n-1$, with equality for some values of $n$ but not for others.
\label{prop:m=2n}
\end{proposition}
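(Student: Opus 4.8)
The plan is to reuse the strict bramble $\B_1$ from Proposition \ref{prop:m>2n}. Its definition---a column with one vertex deleted, together with two rows meeting that column away from the deleted point---makes sense for any $m$ and $n$, and the connectivity and pairwise-intersection verifications carried out in the proof of Proposition \ref{prop:m>2n} never invoked the hypothesis $2n<m$. Thus $\B_1$ remains a strict bramble on $Y_{2n,n}$. By Lemma \ref{lemma:mathoverflow} it then suffices to show $\norm{\B_1}\geq 2n-1$, since this yields $\tw(Y_{2n,n})\geq \sbn(Y_{2n,n})\geq \norm{\B_1}\geq 2n-1$. Concretely, I would prove that no set of size $2n-2$ is a hitting set for $\B_1$.

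To that end I would fix an arbitrary $S\subset V(Y_{2n,n})$ with $\abs{S}=2n-2$ and construct an element of $\B_1$ disjoint from $S$. Since $Y_{2n,n}$ has exactly $2n$ rows and each vertex of $S$ lies in a single row, at most $2n-2$ rows meet $S$, so at least two rows avoid $S$; these will be the two rows of the element. At the same time the $2n-2$ vertices of $S$ lie in only $n$ columns, so not every column can contain two of them, and some column $c$ contains at most one vertex of $S$. Deleting from $c$ the unique vertex of $S\cap c$ (or, when $S\cap c=\emptyset$, any vertex of $c$ outside the two chosen rows) gives a path disjoint from $S$, and taking its union with the two empty rows produces an element of $\B_1$ missing $S$. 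Hence $S$ is not a hitting set, establishing $\norm{\B_1}\geq 2n-1$ and the proposition.

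The step I expect to require the most care is the exact counting that pins the bound at $2n-1$ rather than $2n$. When $\abs{S}=2n-1$ one can no longer force two $S$-free rows: a set containing one vertex in each of $2n-1$ distinct rows meets every two-row element and so is a genuine hitting set, giving $\norm{\B_1}=2n-1$ exactly. This is precisely the obstruction in the boundary case $m=2n$, and it is consistent with $\tw(Y_{2n,n})$ genuinely equalling $2n-1$ for some $n$. The only remaining subtlety is checking that the deleted vertex can always be chosen outside the two selected rows, so that the subgraph is a legitimate element of $\B_1$; this holds because any vertex of $S$ lies in a row meeting $S$ while the two chosen rows do not, and in the empty-column case a free vertex remains since each column has $2n\geq 4$ vertices.
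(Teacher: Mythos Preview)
Your argument is correct, but it takes a different route from the paper. The paper does not work directly on $Y_{2n,n}$: instead it observes that $Y_{2n-1,n}$ is a minor of $Y_{2n,n}$ (delete the edges in one row and contract the resulting pendant vertical edges), applies Proposition~\ref{prop:m<2n} together with Lemma~\ref{lemma:mathoverflow} to get $\tw(Y_{2n-1,n})\geq 2n-1$, and then invokes minor-monotonicity of treewidth. Your approach instead extends the bramble $\B_1$ of Proposition~\ref{prop:m>2n} to the boundary case $m=2n$ and recomputes its order there. Both are valid; yours is more self-contained (no appeal to an external minor-monotonicity result and no second graph), while the paper's proof avoids redoing any bramble analysis by simply transporting a bound already on the shelf. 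Your closing observation that $\norm{\B_1}=2n-1$ exactly when $m=2n$ is a nice sanity check explaining why this particular bramble cannot push the bound to $2n$.
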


\begin{proof}  Combining Lemma \ref{lemma:mathoverflow}  with Proposition \ref{prop:m<2n}, we have that  $\tw(Y_{2n-1,n})\geq \sbn(Y_{2n-1,n})\geq 2n-1$.  Note that $Y_{2n-1,n}$ is a minor of $Y_{2n,n}$: it is obtained by deleting all the edges in a row, and then contracting $n$ vertical edges incident to that row.  Since treewidth is monotonic under graph minors \cite{hal}, we have $\tw(2n,n)\geq 2n-1$.

For $n=2$, the graph $Y_{4,2}$ is simply $Q_3$, the graph of the $3$-dimensional cube.  A tree decomposition of $Q_3$ with width $3$ was presented in \cite[Figure 4]{koy}, and is illustrated in Figure \ref{cube_decomposition}. This implies $\tw(Y_{4,2})\leq 3$. The lower bound of $2n-1$ is equal to $3$ in this case, so we have $\tw(Y_{4,2})=3$, and our lower bound is achieved.
\begin{figure}[hbt]
    \centering
    \includegraphics{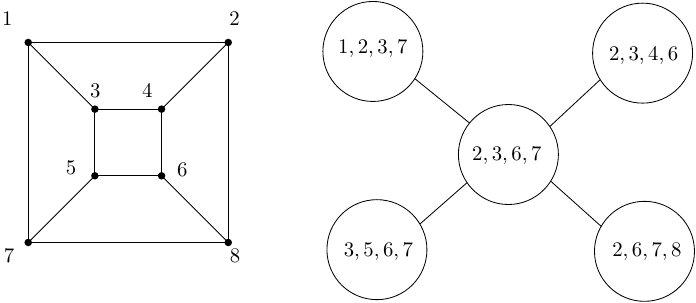}
    \caption{A tree decomposition of $Q_3=Y_{2,4}$ of width $3$}
    \label{cube_decomposition}
\end{figure}

For $n=3$, we present the following (non-strict) bramble $\mathcal{B}_3=\mathcal{F}\cup\mathcal{G}\cup\mathcal{H}$ on $Y_{6,3}$ to show that $\tw(Y_{6,3})\geq 6$.  
\begin{itemize}
    \item An element of $\mathcal{F}$ is the first column with a vertex deleted.
    \item An element of $\mathcal{G}$ is one of three sets:  the top two vertices of the second column; the middle two vertices of the second column; or the bottom two vertices of the second column.
    \item  An element of $\mathcal{H}$ consists of the last column with a vertex deleted, together with two vertices in the second column, neither in the same row as the deleted vertex.
\end{itemize}
See Figure \ref{6x3_bramble} for illustrations.
Each element of $\mathcal{B}_3$ is connected by construction, and every two elements of $\mathcal{B}_3$ touch, so it is a bramble.

\begin{figure}[hbt]
    \centering
    \includegraphics{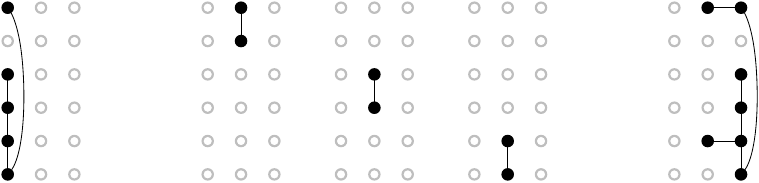}
    \caption{Some elements of the bramble $\mathcal{B}_3$.  The first is an element of $\mathcal{F}$; the next three are all three elements of $\mathcal{G}$; and the last is an element of $\mathcal{H}$.}
    \label{6x3_bramble}
\end{figure}

We claim that $\norm{\mathcal{B}_3}\geq7$.  To show this, suppose $S\subset V(Y_{6,3})$ is a hitting set of size $6$.  Then $S$ has at least two vertices in the first column, or some element of $\mathcal{F}$ is missed; and it has at least three vertices in the second column, or some element of $\mathcal{G}$ is missed.  If the sixth vertex of $S$ is also in one of the first two columns, then $S$ misses at least two vertices in the second column and all the vertices in the third column, so $S$ misses an element of $\mathcal{H}$, a contradiction.  If the sixth vertex $v$ of $S$ is in the third column, then there are exactly three vertices in the second column missed by $S$, at least two of which are in different rows from $v$; call them $u$ and $w$.  Thus the element of $\mathcal{H}$ consisting of the third column with $v$ deleted, together with $u$ and $w$, is missed by $S$, a contradiction.  We conclude that $\norm{\mathcal{B}_3}\geq7$.

We conclude that $\tw(Y_{6,3})= \bn(Y_{6,3})-1\geq7-1=6$, so the lower bound of $2n-1=5$ is not achieved for $n=3$.  (It will follow from the next proof that $\tw(Y_{6,3})=6$.)

\end{proof}

We now use our three propositions to prove Theorem \ref{theorem:stackedprism}.

\begin{proof}[Proof of Theorem \ref{theorem:stackedprism}]  First assume $m\neq 2n$.  Since the strict bramble number is a lower bound on treewidth by Lemma \ref{lemma:mathoverflow}, Propositions \ref{prop:m>2n} and \ref{prop:m<2n} imply  that $\tw(Y_{m,n})\geq \sbn(Y_{m,n})\geq \min\{m,2n\}$.  Using the upper bound from \cite{koo}, we have the equality $\tw(Y_{m,n})=\min\{m,2n\}$.

Now assume $m=2n$.  Proposition \ref{prop:m=2n} combined with the upper bound from \cite{koo} gives us that 
\[2n-1\leq \tw(Y_{2n,n})\leq 2n.\]
 By Proposition \ref{prop:m=2n}, the lower bound is sometimes but not always achieved, so the values of  $2n-1$ and $2n$ are achieved for certain values of $n$.
\end{proof}

In general, we do not know when $\tw(Y_{2n,n})$ takes on which value among $2n-1$ and $2n$.  Using the Sage command \texttt{treewidth()} \cite{sage}, we compute that $\tw(Y_{8,4})=8$, so it is possible that $\tw(Y_{2n,n})=2n$ for all $n>2$, with $Y_{4,2}$ being an anomalous case.

\section{Brambles for the toroidal grid}
\label{section:toroidal}

In this section, we consider $T_{m,n}$, the $m\times n$ toroidal graph with $m$ rows and $n$ columns. We first present a strict bramble of order $2\min\{m,n\}$ in the case when $|m-n|\geq 2$.  We then present a (non-strict) bramble of order $2\min\{m,n\}$ in the case when $|m-n|=1$.  We  combine these with upper bounds on treewidth previous work from \cite{koo} to achieve our desired results from Theorem \ref{theorem:toroidal}.

Consider $T_{m,n}$, where $m\geq n+2$.  We build a strict bramble $\mathcal{B}=\mathcal{C}\cup\mathcal{D}\cup\mathcal{E}$ on $T_{m,n}$ as follows.  In all cases, we forbid the deletion of the intersection vertex of a row and a column.

\begin{itemize}
  \item  An element of $\mathcal{C}$ is the union of one column and four rows, with one vertex removed from the column and from each row, such that no three of the vertices removed from the rows sit in the same column. 
  
  \item  An element of $\mathcal{D}$ is the union of one column and three rows, with one vertex removed from each row, such that the three removed vertices are not all in the same column.
  
  \item  An element of $\mathcal{E}$ is the union of two columns and three rows, such that the two columns each have a vertex removed (possibly in the same row), and such that the three rows each have a vertex removed, \textbf{all three of which are in the same column}.
\end{itemize}

See Figure \ref{sgg_bramble} for illustrations.

\begin{figure}[hbt]
    \centering
    \includegraphics{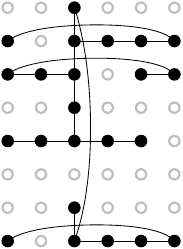}\quad\quad \includegraphics{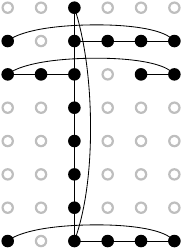}\quad\quad \includegraphics{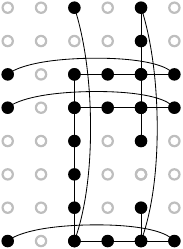}
    \caption{Elements of $\mathcal{C}$, $\mathcal{D}$, and $\mathcal{E}$, respectively}
    \label{sgg_bramble}
\end{figure}

\begin{proposition} If $m\geq n+2$, then the  family $\mathcal{B}=\mathcal{C}\cup\mathcal{D}\cup\mathcal{E}$ is a strict bramble of order $2n$ on $T_{m,n}$.
\label{prop:n<m+1}
\end{proposition}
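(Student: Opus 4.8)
The plan is to mirror the structure of the proof of Proposition \ref{prop:m<2n}, since the setup here is directly analogous: we have three families $\mathcal{C}, \mathcal{D}, \mathcal{E}$ refining one another in complexity, and we want to show the collection is a strict bramble of order $2n$ on $T_{m,n}$ in the regime $m \geq n+2$. First I would verify that $\mathcal{B}$ is a strict bramble. Connectivity of each element is immediate from construction (each is built from full rows and nearly-full columns glued at intersection vertices, and we have forbidden deleting the intersection vertex of any included row-column pair, so nothing disconnects). For mutual intersection, the key observation is that every element of $\mathcal{B}$ contains at least one complete column, or at least three complete rows, and these are the two features I would play against each other: a complete column meets every complete row, and with $n$ columns total, three complete rows versus a single incomplete column still forces a hit unless the deletions conspire—this is exactly what the ``no three removed vertices in the same column'' condition on $\mathcal{C}$ and $\mathcal{D}$, and the ``all three in the same column'' condition on $\mathcal{E}$, are designed to control. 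I would check intersection case by case ($\mathcal{C}$–$\mathcal{C}$, $\mathcal{C}$–$\mathcal{D}$, and so on), leaning on the fact that a family of three or four full rows can only be missed by a column if all three/four of that column's relevant entries are deleted, which the constraints forbid.

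Next I would establish the order. The upper bound $\norm{\mathcal{B}} \leq 2n$ should be the easy direction: taking $S$ to be the union of the first two rows (or two full rows) gives $2n$ vertices, and since every element of $\mathcal{B}$ contains a complete column—wait, not every element does, so more carefully I would take $S$ to be two complete columns, $2m$ vertices, which is too many; instead I would take $S$ to be a single row together with a carefully chosen second structure, or observe that every element contains at least three complete rows or a complete column and pick $S$ to be two rows so that any complete column is hit twice and any three complete rows force an intersection. The right hitting set of size $2n$ is likely two full rows: an element of $\mathcal{C}$ or $\mathcal{D}$ has a full column meeting both rows, while an element of $\mathcal{E}$ has three full rows, at least one of which must coincide in an entry with... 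I would need to confirm two rows suffice, and if not, use two columns-worth $=2n$ of vertices cleverly distributed.

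The lower bound $\norm{\mathcal{B}} \geq 2n$ is where the real work lies and where I expect the main obstacle. I would take an arbitrary $S \subset V(T_{m,n})$ with $\abs{S} = 2n - 1$ and produce an element of $\mathcal{B}$ disjoint from $S$. Because $m \geq n+2$ and $\abs{S} = 2n-1 < 2m$, a counting argument guarantees several rows missing $S$ entirely (at least $m - (2n-1)$ of them, and the hypothesis $m \geq n+2$ is presumably exactly what makes this count large enough to find the three or four empty rows required by $\mathcal{C}$ or $\mathcal{E}$). The delicate part is the column analysis: I would sort columns by how many points of $S$ they contain, and split into cases according to whether some column is empty (giving an element of $\mathcal{D}$ or $\mathcal{C}$ by deleting at $S$-points), whether the $S$-points in the sparse columns sit in distinct rows (favoring $\mathcal{D}$) or share a row (favoring $\mathcal{E}$, which is precisely why $\mathcal{E}$ concentrates its deletions in a single column). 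The combinatorial heart is a pigeonhole balancing act: with only $2n-1$ points spread over $n$ columns and $m \geq n+2$ rows, one must always be able to realize at least one of the three deletion patterns while keeping enough full rows available. I expect verifying that the three cases are genuinely exhaustive—and that in each case the required empty rows and the required column-alignment of deletions can be simultaneously satisfied under $\abs{S}=2n-1$—to be the crux, and I would handle it by a careful case split on the minimum column-occupancy of $S$, exactly paralleling cases (i) and (ii) in Proposition \ref{prop:m<2n} but with an extra case arising from the third row now demanded by the toroidal wraparound.
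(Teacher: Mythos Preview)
Your overall strategy---mutual intersection via structural features, upper bound by exhibiting a small hitting set, lower bound by exhausting an arbitrary $(2n-1)$-set $S$ into one of the three shapes---matches the paper's. But two of the specific claims you lean on are false, and both matter.

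First, it is not true that ``every element of $\mathcal{B}$ contains at least one complete column, or at least three complete rows.'' In $\mathcal{C}$ the single column has a vertex removed \emph{and} all four rows have vertices removed; in $\mathcal{E}$ both columns and all three rows have vertices removed. Only elements of $\mathcal{D}$ carry a complete column, and nothing carries a complete row. The structural fact that actually drives the intersection argument is that every element of $\mathcal{B}$ contains a column missing at most one vertex. Such a near-column meets at least three of any four rows, and the ``no three row-deletions in the same column'' constraint on $\mathcal{C}$ is precisely what prevents those three meeting points from all being the deleted vertices of those rows. The $\mathcal{D}$--$\mathcal{E}$ and $\mathcal{E}$--$\mathcal{E}$ cases are handled similarly, using their respective alignment constraints. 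Your instinct that the column-alignment conditions are designed to control intersections is right, but the mechanism runs through near-full columns, not full ones.

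Second, your row count for the lower bound does not work. From $|S| = 2n-1$ and $m \geq n+2$ you do \emph{not} get several rows disjoint from $S$: the bound $m - (2n-1)$ is negative once $n \geq 4$. What the hypothesis actually buys is $|S| = 2n-1 \leq 2m-5$, so by pigeonhole several rows contain at most one point of $S$. These rows are not empty; their lone $S$-point becomes the vertex you delete from that row when you assemble the bramble element. The case split in the paper then turns on whether three of these sparse rows have their $S$-point in a common column (which pushes you toward $\mathcal{E}$ and, by eating column capacity, also frees up two columns with at most one $S$-point each) or not (which gives $\mathcal{C}$ directly, or $\mathcal{D}$ via a column with no $S$-points). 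Your parallel to Proposition~\ref{prop:m<2n} is apt, but you need \emph{sparse} rows rather than \emph{empty} rows, and the bookkeeping has one more layer than you anticipate.

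For the upper bound: two full rows do work, for the reason above---every element contains a column missing at most one vertex, and such a column meets at least one of any two rows.
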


\begin{proof}   First we show that $\B$ is a strict bramble.  Note that each element of $\mathcal{B}$ is a connected subgraph by construction.  We also need that any two elements of $\mathcal{B}$ intersect in at least one vertex.  Any element of $\B$ has at least one column (minus at most one point), which will intersect at least one of the four rows of any element of $\mathcal{C}$.  Similarly, the complete column of any element of $\mathcal{D}$ will intersect at least one row of an element of $\mathcal{D}$.  Given an element $D\in \mathcal{D}$ and an element  $E\in\mathcal{E}$, they too must intersect: even if the column of $D$ passes through the missing column of $E$, only one column of $E$ can miss all rows of $D$. Finally, given two elements of $\mathcal{E}$, at least one of the columns of one will intersect at least two rows of the other.  Thus, $\B$ is a strict bramble.

We now show that $\norm{\B}=2n$.  Let $S\subset V(T_{m,n})$ with $|S|=2n-1$.  Since $G$ has $n$ columns, at least one column intersects $S$ in no more than $1$ point. Let us call this the $i^{th}$ column. Moreover, since $n\leq m-2$, we have $|S|\leq 2m-5$.  It follows that there are at least five rows that intersect $S$ in at most $1$ point, and so at least four of them do not intersect the $i^{th}$ column at a point of $S$.

First, suppose no three of these four rows have an element of $S$ in the same column. Then the four rows together with the $i^{th}$ column, each with any point of $S$ deleted, forms an element of $\mathcal{C}$, so $S$ is not a hitting set for $\mathcal{B}$.

Otherwise, three of these four rows each contain exactly one element of $S$ in the same column. Since $|S|=2n-1$ and there are $n$ columns, one of which contains $3$ elements of $S$, we must be in one of two cases:

\begin{itemize}
\item[(i)] Two columns of $T_{m,n}$ each contain at most one element of $S$.
\item[(ii)] One column of $T_{m,n}$ contains $0$ elements of $S$, and another contains exactly $2$ elements of $S$.
\end{itemize}

If we are in case (i), we can form an element of $\mathcal{E}$ out of these two columns (with any element of $S$ removed) and three of our rows with their unique element of $S$ in a shared column.  This element of $\mathcal{E}$ does not intersect $S$, so $S$ is not a hitting set for  $\mathcal{B}$.

If we are in case (ii), then some column, say the $j^{th}$, contains no elements of $S$.  Recall that there exist five rows, each with at most one element of $S$.  Suppose not all of these rows have their point of $S$ in the same column (or that at least one of the rows has no element of $S$). Then, choosing the $j^{th}$ column together with three of our rows with the appropriate points removed, we may build an element of $\mathcal{D}$ that contains no vertex in $S$. Otherwise, the five rows have their point of $S$ in the same column.  This means that some column contains at least five elements of $S$, so the other $n-1$ columns of $T_{m,n}$ have at most $2n-6$ elements of $S$ between them.  It follows that at least two columns contain at most one element of $S$, and we are back in case (i).  Either way, we have that $S$ is not a hitting set for $\mathcal{B}$.

Since $S$ is an arbitrary subset of $V(T_{m,n})$ with $2n-1$ elements and $S$ is not a hitting set for $\mathcal{B}$, we conclude that $\norm{\B}\geq 2n$. To see that $\norm{\B}=2n$, note that the set of all vertices in two rows forms a hitting set of size $2n$.
\end{proof}

So far, we  have left out two cases of toroidal grids:  the square toroidal grid $T_{n,n}$, and the almost-square toroidal grid $T_{n+1,n}$.  As shown in \cite{koo}, $\tw(T_{n,n})$ is either $2n-2$ or $2n-1$, with the outcome varying with $n$.  We will show that a similar phenomenon occurs for $\tw(T_{n+1,n})$, except taking on values of $2n-1$ or $2n$. To start, we construct a (non-strict) bramble on $T_{n+1,n}$.  Let $\mathcal{B}=\mathcal{F}\cup\mathcal{G}$ be constructed as follows.  As usual, the intersection of a row and a column may not be deleted.

\begin{itemize}
    \item An element of $\mathcal{F}$ is a column together with a row, with one vertex deleted from the column.
    \item  An element of $\mathcal{G}$ is constructed as a column together with two rows, with a vertex  deleted from the column and from each of the two rows.
\end{itemize}

See Figure \ref{sgg_bramble_almost_square} for an illustration.

\begin{figure}[hbt]
    \centering
    \includegraphics{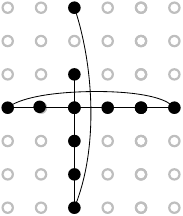}\quad\quad \includegraphics{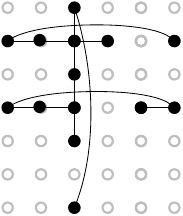}\quad\quad 
    \caption{Elements of $\mathcal{F}$ and $\mathcal{G}$, respectively}
    \label{sgg_bramble_almost_square}
\end{figure}

\begin{proposition}\label{prop:m=n+1}  The collection $\mathcal{B}$ is a bramble of order $2n$ on $T_{n+1,n}$.
\end{proposition}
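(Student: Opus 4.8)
The plan is to verify the two defining properties of a bramble—connectivity and mutual touching—and then pin down the order by exhibiting a hitting set of size $2n$ and showing that no set of size $2n-1$ can be one. Throughout I coordinatize $V(T_{n+1,n})$ by pairs $(a,b)$ with $a$ ranging over the $n+1$ rows and $b$ over the $n$ columns, so that each row is a cycle on $n\geq 3$ vertices and each column a cycle on $n+1\geq 4$ vertices.

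Connectivity is immediate: deleting one vertex from a column leaves a path that still meets every row it is meant to join at the (undeleted) intersection vertex, and each row minus a point is a path containing its intersection with the column. For mutual touching—recall this is all a \emph{non-strict} bramble requires—I would split into cases. If one of the two elements lies in $\mathcal{F}$, its full row meets the column of the other element at some vertex $(a,b)$; either $(a,b)$ survives in the other element, or it is that element's single deleted column-vertex, in which case an adjacent column-vertex supplies the touching edge. If both elements lie in $\mathcal{G}$ and share a column, the two near-complete copies of that column overlap in at least $n-1\geq 2$ vertices. If both lie in $\mathcal{G}$ with distinct columns, I take the column $c$ of one element and a row $r$ of the other with $r$ different from the deleted row of $c$ (possible since every $\mathcal{G}$-element has two distinct rows); then $(r,c)$ lies in the column, and either it lies in the row too or it is the row's deleted vertex, in which case an adjacent row-vertex gives the edge.

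For the order, the bound $\norm{\B}\le 2n$ follows because any two full rows form a hitting set: every element contains a column missing only one row, hence meets at least one of the two chosen rows. The lower bound $\norm{\B}\ge 2n$ is the crux. Given $S\subset V(T_{n+1,n})$ with $\abs{S}=2n-1$, a counting argument shows that some column is \emph{light}, meeting $S$ in at most one vertex (otherwise $\abs{S}\ge 2n$). If some row avoids $S$, I build an element of $\mathcal{F}$ from that row together with a light column, deleting the light column's $S$-vertex (or, if the column is $S$-free, any off-row vertex); the deleted vertex cannot lie on the empty row, so this is legal and the element misses $S$. Otherwise every row meets $S$, which forces at least three rows to meet $S$ in \emph{exactly} one vertex, since spreading $2n-1$ vertices over $n+1$ rows with each row already claiming one leaves excess only $n-2$. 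I then fix a light column $c$, note at most one singleton row has its $S$-vertex in column $c$, and—this is the delicate point—observe that a singleton row whose $S$-vertex avoids $c$ cannot be the row in which $c$'s own $S$-vertex sits, since that would force its $S$-vertex into column $c$ after all. Hence at least two singleton rows are simultaneously compatible with $c$, and an element of $\mathcal{G}$ built from these two rows and column $c$ avoids $S$.

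I expect the main obstacle to be this final combinatorial step: guaranteeing two singleton rows whose deletions are jointly compatible with a light column, with no collision at a forbidden row–column intersection. The remaining bookkeeping is a routine adaptation of the arguments in Propositions \ref{prop:m<2n} and \ref{prop:n<m+1}.
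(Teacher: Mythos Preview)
Your proof is correct and follows essentially the same line as the paper's: the touching argument pivots on the column of one element against the row(s) of the other, the upper bound uses two full rows as a hitting set, and the lower bound splits on whether some row avoids $S$ or every row is hit (forcing at least three singleton rows). Your treatment is in fact slightly more scrupulous than the paper's, since you explicitly check that the vertex deleted from the light column cannot sit in either of the two chosen singleton rows---a point the paper leaves implicit.
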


\begin{proof}
First we argue that $\mathcal{B}$ is a bramble.  By construction, every element of $\B$ is connected. Let $B\in \mathcal{B}$.  We show that $B$ either shares a vertex with or touches by an edge every other element of $\mathcal{B}$.  If compared with an element $B'$ of $\mathcal{F}$, then $B$ will either share a vertex with the row of $B'$, or the column of $B$ will have its missing vertex in the row of $B'$. In the first case, $B\cap B'\neq\emptyset$, and in the latter case, the column of $B$ is still connected to the row of $B'$ with an edge.  Now compare $B$ with an element $B''$ of $\mathcal{G}$.  If $B\cap B''\neq\emptyset$, we are done.  Otherwise,  the column of $B$ does not intersect either row of $B''$. There are two rows of $B''$, and the column of $B$ is missing only one vertex, so the column of $B$ contains at least one of the vertices deleted from a row of $B''$.  This vertex is connected to $B''$ by an edge, so $B$ touches $B''$.  We conclude that $\B$ is a bramble.

Now we argue that $\norm{\B}=2n$.  Let $S\subset V(G)$ with $|S|=2n-1$.  Then some column of the graph has at most $1$ element of $S$, and either:
\begin{itemize}
    \item[(i)] some row has no elements of $S$, or
    \item[(ii)] at least three rows have at most one element of $S$.
\end{itemize}
If we are in case (i), then we can build an element of $\mathcal{F}$ from the column with at most $1$ element of $S$ and the row with no elements of $S$.  This element of $\mathcal{B}$ is not  hit by $S$, so $S$ is not a hitting set.

If we are in case (ii), then at least two of the three rows have their element of $S$ away from the column with at most $1$ element of $S$.  So, we can build a element of $\mathcal{G}$ out of those two rows and that column.  This element of $\mathcal{B}$ is not hit by $S$, so $S$ is not a hitting set.

In all cases, $S$ is not a hitting set, so $\norm{\B}\geq 2n$.  To see that $\norm{\B}\leq2n$, let $S$ consist of the first two rows of $G$.  Then $|S|=2n$, and $S$ hits each element of $\B$.  We conclude that $\norm{\B}=2n$.
\end{proof}

We are now ready to prove our theorem  on the treewidths of toroidal grids.

\begin{proof}[Proof of Theorem \ref{theorem:toroidal}]
First assume $|m-n|\geq 2$.   By Proposition \ref{prop:n<m+1} and Lemma \ref{lemma:mathoverflow}, we have $\tw(T_{m,n})\geq\sbn(T_{m,n})\geq 2\min\{m,n\}$.  Combined with the upper bound from \cite{koo}, we have $\tw(T_{m,n})=2\min\{m,n\}$.

Now assume $|m-n|=1$.  To see that $\tw(T_{m,n})\geq 2\min\{m,n\}-1$, we apply Proposition \ref{prop:m=n+1} combined with the fact that $\tw(G)= \bn(G)-1$ for any graph $G$ \cite{rs}.  The upper bound of $\tw(T_{m,n})\leq 2\min\{m,n\}$ was observed in \cite{koo}. The fact that the two values are achieved for different choices of $m$ and $n$ is established in Example \ref{example:tnn+1} below.

The case of $m=n$ was already handled in \cite{koo}.  This completes the proof.
\end{proof}

\begin{example}\label{example:tnn+1}  In this example we establish that, depending on the value of $n$, we can have either $\tw(T_{n+1,n})=2n-1$ or $\tw(T_{n+1,n})=2n$. First consider the case of $n=3$. It was shown in \cite[Proposition 5.1]{lucena} that $\tw(C_4\cart K_n)=2n-1$, and since $T_{4,3}=C_4\cart C_3=C_4\cart K_3$, we have $\tw(T_{4,3})=2\cdot3-1=5$.  

Now consider the case of $n=4$. We claim that $\tw(T_{5,4})=2\cdot 4=8$. A tree decomposition of $T_{5,4}$ with width $8$ is illustrated in Figure \ref{figure:decomposition_T54}.  We have verified computationally that this tree decomposition is minimal.  See \cite{acp} for an algorithm that can compute treewidth; we used such an algorithm that is implemented in Sage \cite{sage}. Interestingly, $T_{5,4}$ is minimal among graphs of treewidth $8$ with respect to taking minors:  removing any vertex or removing or contracting any edge drops the treewidth, as can be checked computationally.
\end{example}

\begin{figure}[hbt]
    \centering
    \includegraphics{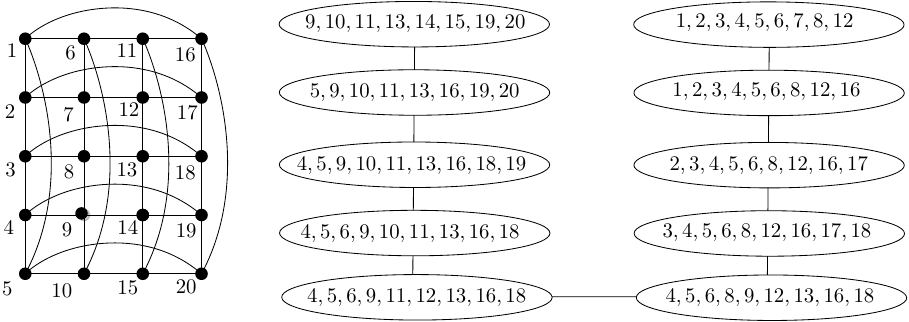} 
    \caption{A tree decomposition of $T_{5,4}$ with width $8$}
    \label{figure:decomposition_T54}
\end{figure}

As noted in \cite{koo}, it is not clear for which values of $n$ we have $\tw(T_{n,n})=2n-2$ or $\tw(T_{n,n})=2n-1$.  Similarly, it is not obvious for which values we have $\tw(T_{n+1,n})=2n-1$ or $\tw(T_{n+1,n})=2n$, and would be worth investigating in future work.

\section{Divisorial gonality}
\label{section:gonality}
We close with a brief discussion of divisorial gonality of graphs, and use Theorems \ref{theorem:stackedprism} and \ref{theorem:toroidal} to compute the gonality of all glued grids except for $T_{n,n}$, $T_{n+1,n}$, and $Y_{2n,n}$.  Divisor theory on graphs was introduced in \cite{bn} as a discrete analog of divisor theory on algebraic curves, and has been used to great effect in tropical geometry and algebraic geometry.  See \cite{vddbg} for more background, especially as it relates to treewidth.

Let $\textrm{Div}(G)$ be the free abelian group on the set of vertices of $G$.  The elements of $\textrm{Div}(G)$ are  formal integer linear combinations of elements of $V(G)$.  We call an element $D\in \textrm{Div}(G)$ a \emph{divisor} on $G$.  We can write such a $D$ as 
\[D=\sum_{v\in V(G)}a_v\cdot(v),\]
where $a_v\in \mathbb{Z}$ for all $v$.  If all $a_v\geq 0$, then we call $D$ an \emph{effective} divisor.  The \emph{degree} of a divisor $D$ is the sum of the integer coefficients appearing in $D$:
\[\deg(D)=\sum_{v\in V(G)}a_v.\]
Intuitively, one can think of a divisor as an assignment of an integer number of chips (possibly negative) to each vertex of the graph.  The degree is then the total number of chips (with negative numbers of chips cancelling positive ones).

Let $w\in V(G)$, and let $\textrm{val}(w)$ be the number of edges incident to $w$. The \emph{chip-firing move with respect to $w$} is a function from $\textrm{Div}(G)$ to $\textrm{Div}(G)$ defined by mapping the divisor $D_1=\sum_{v\in V(G)}a_v\cdot (v)$ to the divisor
\[D_2=(a_w-\val(w))\cdot (w)+\sum_{\substack{v\in V(G),\\vw\in E(G)}}(a_v+1)\cdot(v)+\sum_{\substack{v\in V(G),v\neq w,\\vw\notin E(G)}}a_v\cdot (v).\] Intuitively, $D_1$ is turned into $D_2$ by having $w$ ``chip-fire,'' meaning that $w$ donates $\textrm{val}(w)$ chips to other vertices, one to each neighbor of $w$.
We say that two divisors are \emph{equivalent} if  one can be obtained from the other by a sequence of chip-firing moves.  In Figure \ref{chipfiringexample}, the divisor $D_1=1\cdot(v_1)+1\cdot(v_2)+1\cdot(v_3)$ is equivalent to the divisor $D_2=1\cdot(v_1)+2\cdot(v_2)-1\cdot(v_3)+1\cdot v_6$ since we obtain $D_2$ from $D_1$ by performing the chip-firing move with respect to $v_3$.  Chip-firing $v_2$ then gives us $D_3$, and chip-firing $v_3$ then gives us $D_4$.  Since they differ by a sequence of chip-firing moves, the divisors $D_1$, $D_2$, $D_3$, and $D_4$ are all equivalent to one another.

\begin{figure}[hbt]
    \centering
    \includegraphics[scale=0.8]{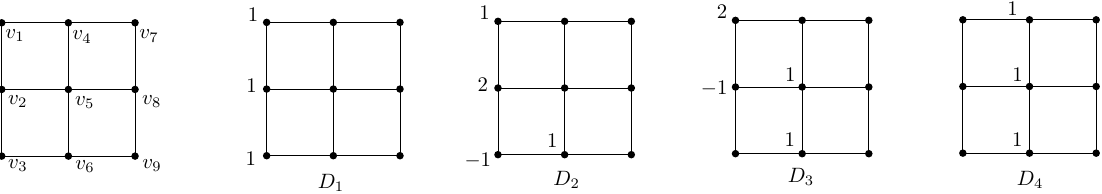}
    \caption{A labelling of the vertices of the $3\times 3$ grid graph $G_{3,3}$, followed by four equivalent divisors on $G_{3,3}$.  We move from $D_1$ to $D_2$, from $D_2$ to $D_3$, and from $D_3$ to $D_4$ by firing $v_3$, $v_2$, and $v_1$, respectively.}
    \label{chipfiringexample}
\end{figure}

Let $D\in \textrm{Div}(G)$.  The \emph{rank} of $D$, denoted $r(D)$, is the smallest integer $r\geq0$ (if it exists) such that for all effective divisors $E$ with $\deg(E)=r$, the divisor $D-E$ is equivalent to an effective divisor.
If no such $r$ exists, we define $r(D)=-1$. The \emph{divisorial gonality} (or simply \emph{gonality}) of a graph, denoted $\gon(G)$, is the smallest integer $k$ such that there exists a divisor $D$ on $G$ with $\deg(D)=k$ and $r(D)\geq1$.

We can equivalently define gonality in terms of the following chip-firing game.  Player 1 places $k$ chips on a graph $G$.  Player 2 then places $-1$ chips on a vertex of $G$.  If Player 1 can perform a series of chip-firing moves to obtain an effective divisor, then Player 1 wins; otherwise Player 2 wins.  The gonality of $G$ is then the smallest number $k$ such that Player 1 always has a winning strategy for their initial placement, so that they can win no matter on which vertex Player $2$ places $-1$ chips.

In general, computing the gonality of a graph is NP-hard \cite{gij}, so any result relating gonality to other invariants can be very useful in understanding gonality for certain classes of graphs.  Of particular use are lower bounds, such as the following result.

\begin{theorem}[Theorem 2.1 in \cite{vddbg}] For any graph $G$, $\tw(G)\leq\gon(G)$.
\label{theorem:twgon}
\end{theorem}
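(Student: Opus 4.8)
The plan is to leverage the duality already assembled in this section, namely the Seymour--Thomas equality $\tw(G) = \bn(G) - 1$ \cite{st} (the easy direction of which is Proposition~\ref{bramble bound}). Rewriting the desired inequality as $\bn(G) \le \gon(G) + 1$, it suffices to prove that \emph{every} bramble $\mathcal{B}$ on $G$ satisfies $\norm{\mathcal{B}} \le \gon(G) + 1$; taking the maximum over all brambles then yields $\tw(G) = \bn(G) - 1 \le \gon(G)$. Concretely, I would fix a bramble $\mathcal{B}$ and an effective divisor $D$ of positive rank with $\deg(D) = \gon(G)$, and produce a hitting set for $\mathcal{B}$ of size at most $\gon(G) + 1$.

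The main tool is the theory of $q$-reduced divisors together with Dhar's burning algorithm. Recall that each divisor class has a unique $q$-reduced representative, and that if $D$ has positive rank then the $q$-reduced divisor $D_q \sim D$ satisfies $D_q(q) \ge 1$ for every vertex $q$: indeed $D_q - (q)$ is again $q$-reduced and equivalent to an effective divisor, so by uniqueness it is itself effective. Thus every vertex $q$ lies in the support of some effective divisor equivalent to $D$, and $\abs{\supp(D_q)} \le \deg(D) = \gon(G)$. The burning algorithm additionally guarantees that the level sets of a firing script passing between two reduced divisors are connected and nested, which is the structural ingredient I would exploit.

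From here I would build the hitting set as follows. Starting from a single reduced divisor $D_{q_0}$, whose support already has size at most $\gon(G)$, I would check whether $\supp(D_{q_0})$ meets every element of $\mathcal{B}$. If some $B \in \mathcal{B}$ is missed, then $B$ contains no chips of $D_{q_0}$; choosing a vertex $v \in B$ and passing to the $v$-reduced divisor $D_v$ moves at least one chip into $B$ (since $D_v(v) \ge 1$), and the firing script from $D_{q_0}$ to $D_v$ has connected, nested level sets avoiding $q_0$. The strategy is to use these nested fired sets to track how the support migrates across $\mathcal{B}$, and to show that a transversal consisting of the support of one reduced divisor together with at most one additional ``boundary'' vertex meets every bramble element; the mutual-touching property of $\mathcal{B}$ is what forces these supports to interact. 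Assembling this gives a hitting set of size $\le \gon(G) + 1$.

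The main obstacle is precisely the bookkeeping in this last step: proving that a single reduced-divisor support, adjusted by only one extra vertex, simultaneously hits \emph{every} element of $\mathcal{B}$, rather than requiring a different reduced divisor (and hence a different support) for each bramble element. Controlling this requires exploiting the laminar structure of Dhar's fired sets, so that the various supports can be organized along a tree and reconciled into one bounded transversal. An alternative route that sidesteps the $\norm{\mathcal{B}}$ formulation is to build a tree decomposition directly: index a tree by the vertices of $G$, assign to $v$ a bag derived from $\supp(D_v)$ together with $v$, and verify the three tree-decomposition axioms using the connectivity and nesting of the fired sets, so that the width is at most $\deg(D) = \gon(G)$. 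In either approach the crux is the same --- converting the global positive-rank condition on $D$ into a local, tree-structured covering statement --- and this is where the cited argument of van Dobben de Bruyn and Gijswijt does the real work.
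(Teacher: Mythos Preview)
The paper does not prove this theorem at all: Theorem~\ref{theorem:twgon} is quoted verbatim as ``Theorem~2.1 in \cite{vddbg}'' and used as a black box in the proofs of Theorems~\ref{theorem:stackedprismgon} and~\ref{theorem:toroidalgon}. There is no argument in the present paper to compare your proposal against.

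As for the proposal itself, it is an outline rather than a proof, and you say as much. The ingredients you list --- $q$-reduced divisors, Dhar's burning algorithm, and the laminar structure of the fired sets --- are indeed the ones underlying the argument in \cite{vddbg}, and your ``alternative route'' of assembling a tree decomposition whose bags come from supports of reduced divisors is essentially the shape of that proof. But the step you flag as the obstacle really is the whole content: showing that the supports $\supp(D_q)$, organized via the nesting of Dhar's level sets, satisfy the three tree-decomposition axioms (in particular axiom~(3), the interval condition) is nontrivial and is not something one can wave at. Your first route, producing a single hitting set of size $\gon(G)+1$ for an arbitrary bramble, is not obviously easier and in fact you give no mechanism for why one reduced support plus one extra vertex should hit every bramble element simultaneously; the ``mutual-touching property of $\mathcal{B}$'' alone does not force that. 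So the proposal correctly identifies the landscape but does not supply a proof --- which is fine here, since neither does the paper.
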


It is worth remarking that the related parameter \emph{pathwidth} \cite{rs1} cannot serve as a lower bound on gonality.  For instance, the gonality of any tree is $1$ \cite[Lemma 1.1]{bn2}, but the pathwidth of a tree can be arbitrarily large \cite[\textsection 1]{rs1}.  In general, treewidth and gonality can be arbitrarily far apart, even fixing treewidth:  if $2\leq k<n$, then there is a graph with treewidth $k$ and gonality at least $n$ \cite{hen}.  However, for some families of graphs treewidth and gonality coincide.  In \cite{db}, it is shown that the gonality of an $m\times n$ grid is $\min\{m,n\}$.  We now prove a similar result for most glued grids.

\begin{theorem}
\label{theorem:stackedprismgon} If $m\neq 2n$, then $\gon(Y_{m,n})=\min\{m,2n\}$.
\end{theorem}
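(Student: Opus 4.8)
The plan is to establish the two inequalities $\gon(Y_{m,n}) \geq \min\{m,2n\}$ and $\gon(Y_{m,n}) \leq \min\{m,2n\}$ separately. The lower bound is immediate from results already in hand: Theorem \ref{theorem:twgon} gives $\gon(Y_{m,n}) \geq \tw(Y_{m,n})$, and since $m \neq 2n$, Theorem \ref{theorem:stackedprism} gives $\tw(Y_{m,n}) = \min\{m,2n\}$. So all of the work lies in producing, in each case of the minimum, an effective divisor of degree $\min\{m,2n\}$ that wins the gonality game, i.e.\ that has positive rank. Throughout I use that $Y_{m,n} = C_m \cart P_n$ has columns that are copies of $C_m$ and rows that are copies of $P_n$.

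First I would handle the case $m < 2n$, so that $\min\{m,2n\} = m$. Here I take $D$ to place one chip on each of the $m$ vertices of a single column, so $\deg D = m$. The key computation is that firing every vertex of one full column translates $D$ onto the neighboring column: the within-column (cycle) edges cancel in pairs, and the only surviving net flow is one chip across each path edge into the adjacent column, leaving one chip per vertex there. Hence $D$ is equivalent to the one-chip-per-vertex divisor on every column, and since these divisors collectively meet every vertex of $Y_{m,n}$, the divisor $D$ has positive rank, giving $\gon(Y_{m,n}) \leq m$.

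Next I would treat the case $m > 2n$, so that $\min\{m,2n\} = 2n$. Indexing the rows cyclically as $0,1,\dots,m-1$, I put two chips on each vertex of a single row, giving a divisor $D$ of degree $2n$. The crucial observation is that firing an entire row of $Y_{m,n}$ has exactly the same effect on each column as firing the corresponding vertex of the cycle $C_m$, because the path edges internal to the row cancel; thus any sequence of full-row firings projects onto chip-firing on $C_m$ and keeps the divisor constant along rows. Since a degree-$2$ divisor of the form $2(v)$ has positive rank on $C_m$ (the cycle has gonality $2$, with $2(0) \sim (k) + (m-k)$ for every $k$), I can sweep: lifting this equivalence, $D$ is equivalent to the divisor with one chip per vertex on rows $k$ and $m-k$, and as $k$ varies these divisors meet every vertex. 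Therefore $D$ has positive rank and $\gon(Y_{m,n}) \leq 2n$.

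The step I expect to be the main obstacle is the case $m > 2n$. In the case $m < 2n$ the sweep is a clean translation along the path direction, but when $2n$ is the minimum the sweep must travel around the cyclic direction, so I must check that the full-row firings remain effective as the two ``fronts'' move outward from the starting row and that together they genuinely reach every row before colliding. The cleanest way to discharge this is precisely the reduction to $\gon(C_m) = 2$ via the projection of full-row firings onto cycle firings; making that reduction rigorous—verifying the per-column effect of a full-row firing, including at the two boundary columns of the path $P_n$—is the technical heart of the argument.
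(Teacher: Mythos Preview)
Your proposal is correct and follows essentially the same approach as the paper: the lower bound via $\tw \leq \gon$ and Theorem~\ref{theorem:stackedprism}, and the upper bound via the column divisor of degree $m$ swept along the path direction and the double-row divisor of degree $2n$ swept along the cycle direction. The only cosmetic difference is that the paper presents both divisors unconditionally and then takes the minimum, and describes the $2n$-chip sweep by explicit row-firings rather than by your (equivalent) reduction to $\gon(C_m)=2$.
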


\begin{proof}  By Theorems \ref{theorem:stackedprism} and \ref{theorem:twgon}, we have $\gon(Y_{m,n})\geq \min\{m,2n\}$.  To show that $\gon(Y_{m,n})\leq \min\{m,2n\}$, we must present a divisor $D$ of degree $\min\{m,2n\}$ with $r(D)\geq 1$. Two divisors of positive rank, one with of degree $m$ and one of degree $2n$, are described below and illustrated in Figures \ref{divisormchips} and \ref{divisor2nchips}.

For a positive rank divisor of degree $m$, let $D$ have one chip on every vertex in the leftmost column. Firing all $m$ vertices in the leftmost column (in any order) moves the chips to the right, so that they cover the next column.  Firing all the vertices on and to the left of that column moves them to the right by one column once again, and so on.  Thus, the divisor $D$ is equivalent to any other divisor consisting of one chip on each vertex of any given column.  This implies that $D$ wins the gonality game, as  wherever  Player 2 puts $-1$ chips, the column of chips can be moved to cover it.  Thus $D$ has $r(D)\geq 1$ and $\deg(D)=m$, implying $\gon(Y_{m,n})\leq m$.

\begin{figure}[hbt]
    \centering
    \includegraphics{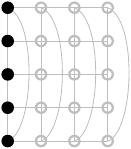}\quad\quad \includegraphics{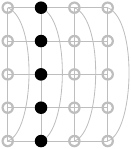}\quad\quad
    \includegraphics{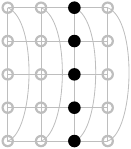}\quad\quad
    \includegraphics{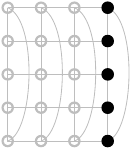}
    \caption{The divisor $D$ with $m$ chips, along with three equivalent divisors.  Each solid dot represents one chip. Moving from the first divisor to the second requires $m$  chip-firing moves; then $2m$ to move from the second to the third; then $3m$ to move from the third to the fourth.}
    \label{divisormchips}
\end{figure}

For a positive rank divisor $D'$ of degree $2n$, choose a row of $Y_{m,n}$, and let $D'$ have two chips on every vertex in that row.  Firing each vertex in this row (again, in any order) moves $n$ chips to the row above and $n$ chips to the row below.  Then firing these two rows and the row between them moves the top chips one row up and the bottom chips one row down, and so on.  Again, $D'$ is equivalent to a collection of divisors that together cover every vertex of the whole graph.  This means that by chip-firing we can cancel out the $-1$ chips placed by Player 2.  Thus $r(D')\geq 1$, and $\gon(Y_{m,n})\leq 2n$.  

\begin{figure}[hbt]
    \centering
    \includegraphics{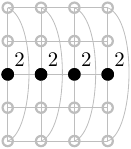}\quad\quad \includegraphics{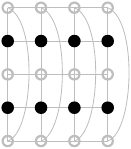}\quad\quad
    \includegraphics{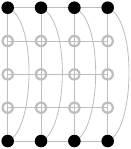}
    \caption{The divisor $D'$ with $2n$ chips, along with two equivalent divisors. Each solid dot represents one chip, unless labelled otherwise. Moving from the first divisor to the second requires $n$  chip-firing moves; then $3n$ to move from the second to the third.}
    \label{divisor2nchips}
\end{figure}

Our divisors give us that $\gon(Y_{m,n})\leq \min\{m,2n\}$, and  we conclude that $\gon(Y_{m,n})=\min\{m,2n\}$.
\end{proof}

\begin{theorem} If $|m-n|\geq 2$, then $\gon(T_{m,n})=2\min\{m,n\}$.
\label{theorem:toroidalgon}
\end{theorem}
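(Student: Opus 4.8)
The plan is to mirror the proof of Theorem \ref{theorem:stackedprismgon} exactly: obtain the lower bound from treewidth and the upper bound from an explicit winning divisor. For the lower bound I would simply combine Theorem \ref{theorem:twgon} with the $|m-n|\geq 2$ case of Theorem \ref{theorem:toroidal}, which together give $\gon(T_{m,n})\geq \tw(T_{m,n})=2\min\{m,n\}$ with no further work. All of the real content is in the matching upper bound $\gon(T_{m,n})\leq 2\min\{m,n\}$.

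For the upper bound I would first invoke the isomorphism $T_{m,n}\cong T_{n,m}$ to assume without loss of generality that $n=\min\{m,n\}$, so that each row is a copy of $C_n$ on $n$ vertices and the target chip count is $2n$. I would then exhibit the divisor $E$ placing two chips on every vertex of a single row $r_0$, for a total of $2n$ chips, and show it wins the gonality game via the standard criterion: it suffices to prove that every vertex of $T_{m,n}$ lies in the support of some effective divisor equivalent to $E$, since then, wherever the opponent removes a chip, an equivalent effective divisor already carries a chip on that vertex.

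The mechanism is a \emph{wave} argument analogous to the $2n$-chip divisor in Theorem \ref{theorem:stackedprismgon}. Every vertex of $T_{m,n}$ has degree $4$. Firing all of row $r_0$ gives each of its vertices exactly two fired neighbors, so each drops from $2$ chips to $0$ and donates one chip up and one chip down; the result is one chip on every vertex of rows $r_0-1$ and $r_0+1$. Iterating, I would fire the expanding band of rows $r_0-k,\dots,r_0+k$: the two boundary rows of the band have three fired neighbors (net change $-1$, so they empty) while the interior rows have all four neighbors fired (net change $0$, so they stay empty), which pushes the two single-chip rows outward to $r_0\pm(k+1)$. A short degree count confirms every intermediate divisor is effective. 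As the downward wave sweeps rows $r_0+1,r_0+2,\dots$ and the upward wave sweeps $r_0-1,r_0-2,\dots$, and since row $r_0$ is already covered by $E$ itself, every row, and hence every vertex, is covered.

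The main obstacle, and the only genuinely new point relative to Theorem \ref{theorem:stackedprismgon}, is the wrap-around: in the stacked prism one factor is a path, so the waves terminate at the two boundary rows, whereas here the column direction is the cycle $C_m$ and the two waves travel around $C_m$ and must collide on the side opposite $r_0$. The care needed is to check that the waves reach every row before or exactly when they meet. This is handled by observing that we need only cover each row by its own firing sequence rather than by a single global one, and that the upward and downward waves between them visit all $m$ rows. The effectiveness of each band-firing move then reduces to the degree bookkeeping above, which I expect to be entirely routine.
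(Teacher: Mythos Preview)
Your proposal is correct and follows essentially the same approach as the paper: the lower bound comes from Theorem~\ref{theorem:twgon} combined with the treewidth computation in Theorem~\ref{theorem:toroidal}, and the upper bound comes from the divisor $E$ with two chips on every vertex of a row, spread across all rows by the same band-firing wave used for the $2n$-chip divisor in Theorem~\ref{theorem:stackedprismgon}. The only cosmetic difference is that the paper exhibits both a row divisor with $2n$ chips and a column divisor with $2m$ chips rather than reducing to $n=\min\{m,n\}$ by the symmetry $T_{m,n}\cong T_{n,m}$, and it does not spell out the wrap-around bookkeeping you (correctly) note is routine.
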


\begin{proof}   By Theorems \ref{theorem:stackedprism} and \ref{theorem:twgon}, we have $\gon(T_{m,n})\geq 2\min\{m,n\}$.  We must now show that $\gon(T_{m,n})\leq 2\min\{m,n\}$.

For a winning divisor with $2n$ chips,   we build a divisor identical to the divisor $D'$ from the previous proof, again choosing a row and placing two chips on each vertex in that row.  Once again, we may fire rows of vertices to move the chips to cover the whole graph.  This means the divisor $D'$ wins the gonality game, and we have $\gon(T_{m,n})\leq 2n$.  Similarly, choosing a column and placing two chips on each vertex  gives a winning divisor with $2m$ chips.  This implies that $\gon(T_{m,n})\leq 2\min\{m,n\}$, so we may conclude that $\gon(T_{m,n})= 2\min\{m,n\}$.
\end{proof}

It is worth noting that the divisors from our proofs still win the gonality game for our exceptional cases $Y_{2n,n}$, $T_{n,n}$, and $T_{n+1,n}$, and so give an upper bound on gonality.  Combined with the lower bounds from treewidth, we have
\[2n-1\leq \gon(Y_{2n,n})\leq 2n,\]
\[2n-2\leq \gon(T_{n,n})\leq 2n,\] 
and
\[2n-1\leq \gon(T_{n+1,n})\leq 2n.\] 

We conjecture that in general $\gon(T_{m,n})=2\min\{m,n\}$ though it has not yet been shown that $\gon(T_{n,n}) = \gon(T_{n+1,n})= 2n$.
%No longer confident in divout torodial grid result...
It is also not known what the gonality of $Y_{2n,n}$ is for general $n$.  We have computed through brute force that $\gon(Y_{4,2})=4$, and our treewidth computations for $Y_{6,3}$ and $Y_{8,4}$ imply that $\gon(Y_{6,3})=6$ and $\gon(Y_{8,4})=8$.  We conjecture that we have $\gon(Y_{2n,n})=2n$ for all $n$.

\bigskip

\noindent \textbf{Acknowledgements.}  The authors are grateful for support they received from NSF Grants DMS1659037 and DMS1347804, and from the Williams College SMALL REU program.  We also thank Jan Kyncl for suggesting the proof of Lemma~\ref{lemma:mathoverflow}.

\bibliographystyle{abbrv}
\bibliography{bibliography}

\end{document}